\newtheorem{thm}{Theorem}[section]
\newtheorem{lem}[thm]{Lemma}
\newtheorem{pro}[thm]{Proposition}
\theoremstyle{definition}
\newtheorem{rem}[thm]{Remark}
\renewcommand{\endrem}{\qed\endtrivlist \@endpefalse}
\numberwithin{equation}{section}
\newcommand{\R}{\mathbb{R}}
\newcommand{\Z}{\mathbb{Z}}
\newcommand{\N}{\mathbb{N}}
\newcommand{\diam}{\operatorname{diam}}
\newcommand{\abs}[1]{\lvert#1\rvert}
\newcommand{\norm}[1]{\lVert#1\rVert}
\newcommand{\floor}[1]{\lfloor#1\rfloor}
\newcommand{\babs}[1]{[#1]}
\newcommand{\cabs}[1]{\lceil#1\rceil}
\newcommand{\Gsn}{G_{\sigma,n}}
\newcommand{\Step}[1]{\medskip\noindent\textsc{Step~#1}:}
\begin{document}

\title[Limiting behaviour of Sobolev intrinsic semi-norms]{%
   Limiting behaviour of intrinsic semi-norms \\ in fractional order 
   Sobolev spaces \\[2ex] \small
   Dedicated to the memory of Charles Goulaouic (1938--1983)}
   
\author[R. Arcang\'eli]{R\'emi Arcang\'eli}
\address{Route de Barat, 31160 Arbas, France}
\email{arcangeli.remi@wanadoo.fr}

\author[J. J. Torrens]{Juan Jos\'e Torrens}
\address{Departamento de Ingenier\'ia Matem\'atica e Inform\'atica, 
         Universidad P\'u\-bli\-ca de Navarra,
	 Campus de Arro\-sa\-d\'ia, 31006 Pamplona, Spain} 
\email{jjtorrens@unavarra.es}

\date{\today}

\begin{abstract}
   We collect and extend results on the limit of
   $\sigma^{1-k}(1-\sigma)^{k}\abs{v}_{l+\sigma,p,\Omega}^p$ as
   $\sigma\to0^{+}$ or $\sigma\to1^{-}$, where $\Omega$ is $\R^n$ or a
   smooth bounded domain, $k\in\{0,1\}$, $l\in\N$,
   $p\in[1,\infty)$, and $\abs{\,\cdot\,}_{l+\sigma,p,\Omega}$ is the
   intrinsic semi-norm of order $l+\sigma$ in the Sobolev space
   $W^{l+\sigma,p}(\Omega)$. In general, the above limit is equal to
   $c[v]^p$, where $c$ and $[\,\cdot\,]$ are, respectively, a constant
   and a semi-norm that we explicitly provide. The particular case
   $p=2$ for $\Omega=\R^n$ is also examined and the results are then
   proved by using the Fourier transform.
\end{abstract}

\subjclass[2010]{Primary 46E35; Secondary 46E30, 46F12}

\keywords{Sobolev spaces, fractional order semi-norms, Fourier 
transform, Beppo-Levi spaces}

\maketitle


\section{Introduction}

Bourgain, Br\'ezis and Mironescu (cf. \cite{BouBreMir01,Bre02}) proved
that, for any $p\in[1,\infty)$ and any $v$ belonging to the Sobolev
space $W^{1,p}(\Omega)$,
\begin{equation}\label{eq:BBM}
   \lim_{\sigma\to1^{-}}(1-\sigma)\abs{v}_{\sigma,p,\Omega}^p
   =p^{-1} K_{p,n} \int_\Omega \abs{\nabla v(x)}^p\,dx,
\end{equation}
where $\Omega$ is either $\R^n$ or a smooth bounded domain in $\R^n$,
with $n\geq 1$, $\abs{\,\cdot\,}_{\sigma,p,\Omega}$ is the intrinsic
or Gagliardo semi-norm of order $\sigma$ in the Sobolev space
$W^{\sigma,p}(\Omega)$ (see Section~\ref{s2} for the precise
definitions), and $K_{p,n}$ is a constant that only depends on $p$ and
$n$. Likewise, Maz'ya and Shaposhnikova~\cite{MazSha02} showed that
\begin{equation}\label{eq:MS}
   \lim_{\sigma\to0^{+}}\sigma\abs{v}_{\sigma,p,\R^n}^p
   =2p^{-1}\abs{S_{n-1}} \abs{v}_{0,p,\R^n}^p,
\end{equation}
where $S_{n-1}$ stands for the unit sphere in $\R^n$ (i.e.
$S_{n-1}=\{x\in\R^n\mid\abs{x}=1\}$) and $\abs{S_{n-1}}$ is its area.

These results have been extended and completed by several authors. Let
us quote, for example, Milman~\cite{Mil05}, who placed them in the
frame of interpolation spaces, or Karadzhov, Milman and
Xiao~\cite{KarMilXia05}, Kolyada and Lerner~\cite{KolLer05} and
Triebel~\cite{Tri11}, who generalized them in the context of Besov
spaces.

Our interest in this subject comes from the study of sampling
inequalities involving Sobolev semi-norms. In~\cite{ArcLopTor10}, we
have extended previous results (cf.~\cite{ArcLopTor07,ArcLopTor09}) in
order to allow fractional order Sobolev semi-norms on the left-hand
side of sampling inequalities. We have then realized that the complete
comprehension of the constants involved in sampling inequalities needs
an understanding of the asymptotic behaviour of the corresponding
fractional order Sobolev semi-norms. In fact, we need extensions of
\eqref{eq:BBM} and \eqref{eq:MS} having the following form:
\begin{equation}\label{eq:intro0}
   \lim_{\sigma\to\ell} \sigma^{1-k}(1-\sigma)^{k}
   \abs{v}_{l+\sigma,p,\Omega}^p=c[v]^p,
\end{equation}
where $\ell=0^{+}$ or $1^{-}$, $\Omega$ is $\R^n$ or a smooth bounded
domain, $k\in\{0,1\}$, $l\in\N$, $p\in[1,\infty)$, and
$\abs{\,\cdot\,}_{l+\sigma,p,\Omega}$ is the intrinsic semi-norm of
order $l+\sigma$ in the Sobolev space $W^{l+\sigma,p}(\Omega)$. On the
right-hand side of \eqref{eq:intro0}, the notations $[\,\cdot\,]$
and~$c$ stand, respectively, for a semi-norm and a constant to be
specified.

The first part of this paper will be devoted to establish
\eqref{eq:intro0}. Most of the work may be routine, but anyway we find
it useful to collect and state in one place this kind of results and
to provide explicit expressions of the constants and semi-norms
involved in the limits.

In the second part of the paper, we shall focus on the case $p=2$ and
$\Omega=\R^n$. We show that \eqref{eq:intro0} can be obtained by means
of the Fourier transform. This line of reasoning was suggested in
\cite[Remark~2]{BouBreMir01} starting from a result by Masja and Nagel
\cite{MasNag78}. As a by-product, for $m\in\N$ and $s\geq0$, we
establish a relationship between the Sobolev space $W^{m+s,2}(\R^n)$
and the Beppo-Levi space $X^{m,s}$, which is a space that arises in
spline theory (cf.~\cite[Chapter~I]{ArcLopTor04}).


\section{Preliminaries}\label{s2}

For any $x\in\R$, we shall write $\floor{x}$ for the \emph{floor} (or
integer part) of $x$, that is, the unique integer satisfying
$\floor{x}\leq x<\floor{x}+1$. The letter $n$ will always stand for an
integer belonging to $\N^\ast=\N\setminus\{0\}$ (by convention,
$0\in\N$). The Euclidean norm in $\R^n$ will be denoted by
$\abs{\,\cdot\,}$.

For any multi-index $\alpha=(\alpha_1,\ldots,\alpha_n)\in\N^n$, we
write $\abs{\alpha}=\alpha_1+\cdots+\alpha_n$ and
$\partial^\alpha=\partial^{\abs{\alpha}}/\bigl(\partial x_1^{\alpha_1}
\cdots \partial x_n^{\alpha_n}\bigr)$, $x_1,\ldots,x_n$ being the
generic independent variables in $\R^n$. In addition, given $l\in\N$
and $x=(x_1,\ldots,x_n)\in\R^n$, we write $\binom{l}{\alpha}=l! /
(\alpha_1!\cdots\alpha_n!)$ and $x^\alpha=x_1^{\alpha_1}\cdots
x_n^{\alpha_n}$. We shall make frequent use of the relation
\begin{equation}\label{eq:xToalpha}
   \abs{x}^{2l}=\sum_{\abs{\alpha}=l} \binom{l}{\alpha} x^{2\alpha},
\end{equation}
valid for any $l\in\N$ and $x\in\R^n$.

Let $\Omega$ be a nonempty open set in $\R^n$.  For any $r\in\N$
and for any $p\in[1,\infty)$, we shall denote by $W^{r,p}(\Omega)$
the usual Sobolev space defined by
\begin{equation*}
  W^{r,p}(\Omega)=\{\,v\in L^p(\Omega) \mid
  \forall\alpha\in\N^n,\ \abs{\alpha}\leq r,\ \partial^\alpha v\in
  L^p(\Omega) \,\}.
\end{equation*}
We recall that the derivatives $\partial^\alpha v$ are taken in the
distributional sense.  The space $W^{r,p}(\Omega)$ is
equipped with the semi-norms $\abs{\,\cdot\,}_{j,p,\Omega}$, with
$j\in\{0,\ldots,r\}$, and the norm $\norm{\,\cdot\,}_{r,p,\Omega}$
given by
\begin{equation*}
  \abs{v}_{j,p,\Omega}=\biggl( \sum_{\abs{\alpha}=j}
      \int_{\Omega} \abs{\partial^\alpha v(x)}^p\,dx \biggr)^{1/p}
  \text{\ and\ \ }
  \norm{v}_{r,p,\Omega}=\biggl( \sum_{j=0}^r
      \abs{v}_{j,p,\Omega}^p\biggr)^{1/p}.
\end{equation*}
For any $r\in(0,\infty)\setminus\N$ and for any $p\in[1,\infty)$,
we shall denote by $W^{r,p}(\Omega)$ the Sobolev space of
noninteger order $r$, formed by the (equivalence classes of)
functions $v\in W^{\floor{r},p}(\Omega)$ such that
\begin{equation*}
  \abs{v}_{r,p,\Omega}^p
  =\sum_{\abs{\alpha}=\floor{r}}\int_{\Omega\times\Omega}
  \frac{\abs{\partial^\alpha v(x)-\partial^\alpha
  v(y)}^p}{\abs{x-y}^{n+p(r-\floor{r})}}\,dx\,dy < \infty.
\end{equation*}
Besides the semi-norms
$\abs{\,\cdot\,}_{j,p,\Omega}$, with $j\in\{0,\ldots,\floor{r}\}$,
and $\abs{\,\cdot\,}_{r,p,\Omega}$, the space $W^{r,p}(\Omega)$ is
endowed with the norm
\begin{equation*}
  \norm{v}_{r,p,\Omega}=\Bigl(\norm{v}_{\floor{r},p,\Omega}^p
    + \abs{v}_{r,p,\Omega}^p\Bigr)^{1/p}.
\end{equation*}

Given $j\in\N$ and $v\in W^{j+1,p}(\Omega)$, we put
\begin{equation*}
   \abs{\nabla v}_{0,p,\Omega}=\biggl( \int_\Omega \abs{\nabla v(x)}^p
   \,dx \biggr)^{1/p} \quad\text{and}\quad
   \abs{\nabla v}_{j,p,\Omega}=\biggl( \sum_{\abs{\alpha}=j} 
   \abs{\nabla(\partial^\alpha v)}_{0,p,\Omega}^p
   \biggr)^{1/p}.
\end{equation*}
The mapping $v\mapsto\abs{\nabla v}_{j,p,\Omega}$ is a semi-norm in
$W^{j+1,p}(\Omega)$ equivalent to $\abs{\,\cdot\,}_{j+1,p,\Omega}$.

We shall use the following definition of the Fourier transform
$\hat{v}$ of a function $v\in L^1(\R^n)$:
\begin{equation*}
   \forall\xi\in\R^n,\ \hat{v}(\xi)=\int_{\R^n} 
   v(x) e^{-i x\cdot\xi}\,dx.
\end{equation*}
We refer to standard textbooks for the properties of the Fourier 
transform and their extension to the space $\mathcal{S}'(\R^n)$ of 
tempered distributions. We just recall the following result:
\begin{equation}\label{eq:pdFT}
   \forall v\in\mathcal{S}'(\R^n), \forall\alpha\in\N^n, 
   \ i^{\abs{\alpha}}\xi^\alpha \hat{v}
   =\widehat{\partial^\alpha v}.
\end{equation}


\section{General results for $p\in[1,\infty)$}\label{s3}

As mentioned in the introduction, for a smooth bounded domain 
$\Omega$ or for $\Omega=\R^n$, we are interested in obtaining the 
following limit:
\begin{equation}\label{eq:intro0bis}
   \lim_{\sigma\to\ell} \sigma^{1-k}(1-\sigma)^{k}
   \abs{v}_{l+\sigma,p,\Omega}^p,
\end{equation}
with $\ell\in\{0^{+},1^{-}\}$, $k\in\{0,1\}$, $l\in\N$,
$p\in[1,\infty)$ and $v$ belonging to a suitable Sobolev space. For
$\Omega=\R^n$, we shall study the cases $(\ell,k)=(0^+,0)$ and
$(1^-,1)$, whereas, for $\Omega$ bounded, we shall consider the cases
$(\ell,k)=(0^+,1)$ and $(1^-,1)$, taking into account that
$\lim_{\sigma\to 0^+}(1-\sigma)=1$. The limit corresponding to any
other combination of $\ell$ and $k$ follows trivially from the above
cases.

\begin{thm}\label{thm33}
   Let $\Omega$ be a bounded domain in $\R^n$ with a
   Lipschitz-conti\-nu\-ous boundary. Let $p\in[1,\infty)$ and $l\in\N$.
   Then, for any $v\in W^{l+1,p}(\Omega)$,
   \begin{equation}\label{eq:lims1Omega}
      \lim_{\sigma\to1^{-}}
      (1-\sigma)\abs{v}_{l+\sigma,p,\Omega}^p=
      p^{-1}K_{p,n}\abs{\nabla v}_{l,p,\Omega}^p,
   \end{equation}
   where
   \begin{equation}\label{eq:defKpn}
      K_{p,n}=\int_{S_{n-1}}\abs{\omega\cdot\nu}^p\,d\omega,
   \end{equation}
   $\nu$ being any unit vector in $\R^n$.
\end{thm}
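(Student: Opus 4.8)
The plan is to reduce the statement to the case $l=0$ --- which is precisely the Bourgain--Br\'ezis--Mironescu limit \eqref{eq:BBM} --- and then to identify the constant appearing there with \eqref{eq:defKpn}. First I would note that, since $\sigma\in(0,1)$ forces $\floor{l+\sigma}=l$, the definition of the intrinsic semi-norm gives, for every $v\in W^{l+1,p}(\Omega)$ and every such $\sigma$,
\begin{equation*}
   \abs{v}_{l+\sigma,p,\Omega}^p
   =\sum_{\abs{\alpha}=l}\int_{\Omega\times\Omega}
   \frac{\abs{\partial^\alpha v(x)-\partial^\alpha v(y)}^p}
        {\abs{x-y}^{n+p\sigma}}\,dx\,dy
   =\sum_{\abs{\alpha}=l}\abs{\partial^\alpha v}_{\sigma,p,\Omega}^p ,
\end{equation*}
where each $\partial^\alpha v$ with $\abs{\alpha}=l$ lies in $W^{1,p}(\Omega)\subset W^{\sigma,p}(\Omega)$. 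Multiplying by $1-\sigma$ and letting $\sigma\to1^{-}$, the limit passes through this finite sum, and applying \eqref{eq:BBM} to each $\partial^\alpha v$ yields
\begin{equation*}
   \lim_{\sigma\to1^{-}}(1-\sigma)\abs{v}_{l+\sigma,p,\Omega}^p
   =p^{-1}K_{p,n}\sum_{\abs{\alpha}=l}\int_\Omega
   \abs{\nabla(\partial^\alpha v)(x)}^p\,dx
   =p^{-1}K_{p,n}\abs{\nabla v}_{l,p,\Omega}^p ,
\end{equation*}
the last equality being the definition of $\abs{\nabla v}_{l,p,\Omega}$. Thus, once \eqref{eq:BBM} is granted, the only remaining task is to establish the explicit value \eqref{eq:defKpn} of $K_{p,n}$.

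To pin down that value I would carry out, for $l=0$, the standard proof of \eqref{eq:BBM} while tracking the constant. Setting $I_\sigma(v)=\int_{\Omega\times\Omega}\abs{v(x)-v(y)}^p\abs{x-y}^{-n-p\sigma}\,dx\,dy$, substituting $y=x+h$, and passing to polar coordinates $h=r\omega$ gives
\begin{equation*}
   I_\sigma(v)=\int_{S_{n-1}}\int_0^{\diam\Omega} r^{-1-p\sigma}
   \Bigl(\int_{\Omega\cap(\Omega-r\omega)}
   \abs{v(x+r\omega)-v(x)}^p\,dx\Bigr)\,dr\,d\omega .
\end{equation*}
For $v\in C^1(\overline\Omega)$ one writes $v(x+r\omega)-v(x)=r\int_0^1\omega\cdot\nabla v(x+tr\omega)\,dt$, so the inner integral equals $r^p\int_\Omega\abs{\omega\cdot\nabla v(x)}^p\,dx+o(r^p)$ as $r\to0^{+}$; splitting the $r$-integral at a fixed $\delta>0$ and using $\int_0^\delta r^{p(1-\sigma)-1}\,dr=\delta^{p(1-\sigma)}/(p(1-\sigma))$, the factor $1-\sigma$ cancels and $\delta^{p(1-\sigma)}\to1$, which is the origin of the $p^{-1}$, while the range $r>\delta$ contributes $O(1-\sigma)\to0$. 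Sending first $\sigma\to1^{-}$ and then $\delta\to0^{+}$ leaves $p^{-1}\int_{S_{n-1}}\int_\Omega\abs{\omega\cdot\nabla v(x)}^p\,dx\,d\omega$, and by Fubini together with the rotational invariance of the surface measure on $S_{n-1}$ this is $p^{-1}\bigl(\int_{S_{n-1}}\abs{\omega\cdot\nu}^p\,d\omega\bigr)\int_\Omega\abs{\nabla v(x)}^p\,dx=p^{-1}K_{p,n}\abs{\nabla v}_{0,p,\Omega}^p$. A density argument then extends the identity from $C^1(\overline\Omega)$ to $W^{1,p}(\Omega)$.

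The delicate points, which are exactly those already hidden in \eqref{eq:BBM}, are where I expect the real work to lie. To run the density argument one needs an estimate $(1-\sigma)\abs{w}_{\sigma,p,\Omega}^p\le C\norm{w}_{1,p,\Omega}^p$ with $C$ independent of $\sigma$ near $1$ --- a uniform, one-sided version of the limit itself --- and one must control the boundary layer $\{x\in\Omega:x+r\omega\notin\Omega\}$, whose measure is $O(r)$ precisely because $\partial\Omega$ is Lipschitz, so that its contribution is genuinely of lower order in $r$. If instead one simply quotes \eqref{eq:BBM} together with the known value of its constant, then only the first paragraph above is required and the proof is immediate; the rest is the cost of being self-contained about $K_{p,n}$.
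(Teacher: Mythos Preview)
Your proposal is correct and follows essentially the same route as the paper: the reduction from general $l$ to $l=0$ via $\abs{v}_{l+\sigma,p,\Omega}^p=\sum_{\abs{\alpha}=l}\abs{\partial^\alpha v}_{\sigma,p,\Omega}^p$ is identical to the paper's argument, and both then invoke the Bourgain--Br\'ezis--Mironescu result for $l=0$. The only difference is cosmetic: to nail down $K_{p,n}$ the paper quotes Theorems~2 and~3 of~\cite{BouBreMir01} in their mollifier form and specializes to the family $\rho_\varepsilon(t)=\varepsilon d^{-\varepsilon}t^{\varepsilon-n}\mathbf{1}_{t\le d}$ (with $d=\diam\Omega$), which already carries the constant \eqref{eq:defKpn}, whereas you sketch the direct polar-coordinate computation; the content is the same.
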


\begin{proof}
   The case $l=0$ is a result by Bourgain, Br\'ezis and Mironescu
   (cf.~\cite{BouBreMir01}). For the sake of completeness, we just
   clarify here some details of their proof. We use, however, the
   notations in~\cite{Bre02}, which are slightly simpler. Let
   $(\rho_\varepsilon)_{\varepsilon>0}$ be any family of nonnegative
   functions, contained in $L^1_{\mathrm{loc}}(0,\infty)$, such that
   \begin{equation*}
      \int_0^\infty \rho_\varepsilon(t)t^{n-1}\,dt=1,
      \ \forall \varepsilon>0,
      \quad \text{and}\quad 
      \ \lim_{\varepsilon\to0}\int_\delta^\infty 
      \rho_\varepsilon(t)t^{n-1}\,dt =0,\ \forall \delta>0.
   \end{equation*}
   It follows from Theorems~2 and~3 in~\cite{BouBreMir01} that, for 
   any $v\in W^{1,p}(\Omega)$, 
   \begin{equation}\label{eq1:th31}
      \lim_{\varepsilon\to0}\int_{\Omega\times\Omega}
      \frac{\abs{v(x)-v(y)}^p}{\abs{x-y}^p}
      \rho_\varepsilon(\abs{x-y})\,dx\,dy=K_{p,n}\abs{\nabla 
      v}_{0,p,\Omega}^p,
   \end{equation}
   where $K_{p,n}$ is defined by \eqref{eq:defKpn}. Let us choose the
   family $(\rho_\varepsilon)_{\varepsilon>0}$ given by
   \begin{equation*}
      \rho_\varepsilon(t)=\begin{cases}
         \varepsilon d^{-\varepsilon} t^{\varepsilon-n}, &
	    \text{if } t\leq d, \\
         0,& \text{if } t> d,
      \end{cases}
   \end{equation*}
   $d$ being the diameter of $\Omega$. Then, \eqref{eq1:th31} becomes
   \begin{equation*}
      \lim_{\varepsilon\to0} \varepsilon d^{-\varepsilon}
      \int_{\Omega\times\Omega}
      \frac{\abs{v(x)-v(y)}^p}{\abs{x-y}^{n+p-\varepsilon}}
      \,dx\,dy=K_{p,n}\abs{\nabla v}_{0,p,\Omega}^p,
   \end{equation*}
   which implies \eqref{eq:lims1Omega}, for $l=0$, if we replace
   $\varepsilon$ by $p(1-\sigma)$.
   
   Let us now consider the case $l\geq 1$. Since the $l$th-order 
   derivatives of functions in $W^{l+1,p}(\Omega)$ belong to
   $W^{1,p}(\Omega)$, by the case $l=0$, for any
   $v\in W^{l+1,p}(\Omega)$, we have
   \begin{align*}
      \lim_{\sigma\to1^{-}}(1-\sigma)\abs{v}_{l+\sigma,p,\Omega}^p 
      &=\lim_{\sigma\to1^{-}} (1-\sigma) \sum_{\abs{\alpha}=l}
           \abs{\partial^\alpha v}_{\sigma,p,\Omega}^p \\
      &=\sum_{\abs{\alpha}=l} \lim_{\sigma\to1^{-}} 
           (1-\sigma)\abs{\partial^\alpha v}_{\sigma,p,\Omega}^p \\
      &=\sum_{\abs{\alpha}=l} p^{-1}K_{p,n}
      \abs{\nabla(\partial^\alpha v)}_{0,p,\Omega}^p
      =p^{-1}K_{p,n}\abs{\nabla v}_{l,p,\Omega}^p,
   \end{align*}
   which yields \eqref{eq:lims1Omega}.
\end{proof}

\begin{rem}\label{rem:valKpn}
   Let us provide the explicit value of the constant $K_{p,n}$ given
   by \eqref{eq:defKpn}. Since the definition of $K_{p,n}$ is
   independent of the unit vector $\nu$, we can take
   $\nu=(1,0,\ldots,0)$. On the one hand, we have
   \begin{align*}
      \int_{x_1^2+\cdots+x_n^2\leq 1}\abs{x_1}^p\,dx 
      &=\int_0^1\Bigl(\int_{S_{n-1}}t^{n-1}\abs{t\omega_1}^p
           \,d\omega\Bigr)dt \\
      &=\Bigl(\int_{S_{n-1}}\abs{\omega\cdot\nu}^p
           \,d\omega\Bigr)  \int_0^1t^{n-1+p}dt
      =\frac{K_{p,n}}{n+p}.
   \end{align*}
   On the other hand,
   \begin{multline*}
      \int_{x_1^2+\cdots+x_n^2\leq 1}\abs{x_1}^p\,dx
      =\int_{-1}^1\abs{x_1}^p\Bigl(
        \int_{x_2^2+\cdots+x_n^2\leq 1-x_1^2}\,dx_2\cdots 
        dx_n\Bigr)dx_1 \\
      =\vartheta_{n-1}\int_{-1}^1 \abs{x_1}^p(1-x_1^2)^{(n-1)/2}\,dx_1 
      =2\vartheta_{n-1}\int_0^1 x_1^p\,(1-x_1^2)^{(n-1)/2}\,dx_1 \\
      =\vartheta_{n-1}\int_0^1 t^{(p-1)/2}\,(1-t)^{(n-1)/2}\,dt 
      =\vartheta_{n-1}B\Bigl(\frac{p+1}{2},\frac{n+1}{2}\Bigr),
   \end{multline*}
   where $\vartheta_{n-1}$ is the volume of the unit ball in
   $\R^{n-1}$ and $B$ is the Euler Beta function. Hence,
   \begin{equation}\label{eq:valKpn}
      K_{p,n}=(n+p)\vartheta_{n-1}
      B\Bigl(\frac{p+1}{2},\frac{n+1}{2}\Bigr)
      =\frac{2\pi^{(n-1)/2}\Gamma((p+1)/2)}{\Gamma((n+p)/2)},
   \end{equation}
   where $\Gamma$ stands for the Euler Gamma function. Although
   Theorem~\ref{thm33} only requires the value of $K_{p,n}$ for
   $p\geq1$, the above expression is valid, in fact, for any $p\geq0$.
\end{rem}

\begin{thm}\label{thm31}
   Let $p\in[1,\infty)$ and $l\in\N$. Then, for any $v\in
   W^{l+1,p}(\R^n)$,
   \begin{equation}\label{eq:limThm31}
      \lim_{\sigma\to1^{-}}
      (1-\sigma)\abs{v}_{l+\sigma,p,\R^n}^p=
      p^{-1}K_{p,n}\abs{\nabla v}_{l,p,\R^n}^p,
   \end{equation}
   where $K_{p,n}$ is given by \eqref{eq:defKpn}.
\end{thm}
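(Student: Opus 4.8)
The plan is to reproduce the two-step scheme of the proof of Theorem~\ref{thm33}: first settle the case $l=0$ using the Bourgain--Br\'ezis--Mironescu formula~\eqref{eq1:th31}, which is valid for $\Omega=\R^n$ as well (the underlying Theorems~2 and~3 of~\cite{BouBreMir01} hold there), and then deduce the case $l\geq1$ by differentiation exactly as before. The only feature absent from the bounded case is that over $\R^n$ the distance $\abs{x-y}$ is unbounded, so no single kernel $\rho_\varepsilon$ supported on a bounded interval reconstructs the whole intrinsic semi-norm; accordingly, the semi-norm must be split into a near-diagonal part, to which~\eqref{eq1:th31} applies, and a far part, which is dealt with by a crude bound.

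For $l=0$, fix $v\in W^{1,p}(\R^n)$ and, for $\sigma\in(1/2,1)$, write
\begin{multline*}
   \abs{v}_{\sigma,p,\R^n}^p
   =\int_{\abs{x-y}\leq1}\frac{\abs{v(x)-v(y)}^p}{\abs{x-y}^{n+p\sigma}}\,dx\,dy \\
   +\int_{\abs{x-y}>1}\frac{\abs{v(x)-v(y)}^p}{\abs{x-y}^{n+p\sigma}}\,dx\,dy
   =:I_\sigma+J_\sigma.
\end{multline*}
To the near part $I_\sigma$ I would apply~\eqref{eq1:th31} with the family $\rho_\varepsilon(t)=\varepsilon t^{\varepsilon-n}$ for $t\leq1$ and $\rho_\varepsilon(t)=0$ for $t>1$; one checks directly that $\int_0^\infty\rho_\varepsilon(t)t^{n-1}\,dt=1$ for every $\varepsilon>0$ and that $\int_\delta^\infty\rho_\varepsilon(t)t^{n-1}\,dt\to0$ as $\varepsilon\to0$ for every $\delta>0$, so that $\lim_{\varepsilon\to0}\varepsilon\int_{\abs{x-y}\leq1}\abs{v(x)-v(y)}^p\abs{x-y}^{-(n+p-\varepsilon)}\,dx\,dy=K_{p,n}\abs{\nabla v}_{0,p,\R^n}^p$; the substitution $\varepsilon=p(1-\sigma)$ then yields $\lim_{\sigma\to1^-}(1-\sigma)I_\sigma=p^{-1}K_{p,n}\abs{\nabla v}_{0,p,\R^n}^p$. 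For the far part, using $\abs{v(x)-v(y)}^p\leq2^{p-1}(\abs{v(x)}^p+\abs{v(y)}^p)$, Fubini's theorem, and $\int_{\abs{z}>1}\abs{z}^{-(n+p\sigma)}\,dz=\abs{S_{n-1}}/(p\sigma)$, I get $J_\sigma\leq 2^p(p\sigma)^{-1}\abs{S_{n-1}}\abs{v}_{0,p,\R^n}^p$, which stays bounded as $\sigma\to1^-$, so that $(1-\sigma)J_\sigma\to0$. Adding the two limits gives~\eqref{eq:limThm31} for $l=0$.

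For $l\geq1$, the $l$th-order derivatives of a function in $W^{l+1,p}(\R^n)$ lie in $W^{1,p}(\R^n)$ and $\abs{v}_{l+\sigma,p,\R^n}^p=\sum_{\abs{\alpha}=l}\abs{\partial^\alpha v}_{\sigma,p,\R^n}^p$, so I would apply the case $l=0$ to each $\partial^\alpha v$, multiply by $(1-\sigma)$, pass to the limit term by term, and recognize $\sum_{\abs{\alpha}=l}\abs{\nabla(\partial^\alpha v)}_{0,p,\R^n}^p=\abs{\nabla v}_{l,p,\R^n}^p$ --- verbatim as in Theorem~\ref{thm33}. The only point requiring attention is that the bound on the far part $J_\sigma$ must be uniform in $\sigma$ near $1$, which is the reason for restricting to $\sigma>1/2$ so that the tail integral $\int_{\abs{z}>1}\abs{z}^{-(n+p\sigma)}\,dz$ remains finite and bounded; everything else is a routine adaptation of the bounded-domain argument, so I expect no real obstacle.
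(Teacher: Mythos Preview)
Your argument is correct, and the treatment of $l\geq1$ coincides verbatim with the paper's. For $l=0$, though, you and the paper take genuinely different routes. The paper derives the $\R^n$ case from Theorem~\ref{thm33}: first for $v\in C_0^\infty(\R^n)$, where the support sits inside some ball and the bounded-domain result applies, and then for general $v\in W^{1,p}(\R^n)$ by density (with Milman cited for an explicit version). You instead invoke~\eqref{eq1:th31} directly on $\Omega=\R^n$, applying it with a truncated kernel to the near-diagonal piece $I_\sigma$ and disposing of the tail $J_\sigma$ by the elementary $L^p$ bound. Your route is more explicit and sidesteps the density step and the uniform-in-$\sigma$ estimates it requires; on the other hand, it rests on the claim that Theorems~2 and~3 of~\cite{BouBreMir01} hold on $\R^n$, whereas within the paper~\eqref{eq1:th31} has only been stated for bounded Lipschitz domains---so you are importing from BBM essentially the result being proved, while the paper's sketch keeps the argument internal by reducing to Theorem~\ref{thm33}. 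Both are legitimate; yours is closer in spirit to redoing the proof of Theorem~\ref{thm33} on $\R^n$, with the tail estimate compensating for the absence of a finite diameter.
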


\begin{proof}
   This result, for $l=0$, is usually credited to Bourgain, Br\'ezis
   and Mironescu~\cite{BouBreMir01}, since it is implicitly contained
   in their paper. It can be proved from Theorem~\ref{thm33}, first
   for smooth functions with compact support and then, by density, for
   any element in $W^{l+1,p}(\R^n)$. An explicit proof is given by
   Milman~\cite[Subsection 3.1]{Mil05}, but without providing the
   precise definition of the constant $K_{p,n}$, which can be deduced
   from Karadzhov, Milman and Xiao~\cite[p.~332]{KarMilXia05}. The
   case $l>0$ is identical to that in the proof of
   Theorem~\ref{thm33}.
\end{proof}

\begin{thm}\label{thm32}
   Let $p\in[1,\infty)$, $l\in\N$ and $\sigma_0\in(0,1)$. Then, for
   any $v\in W^{l+\sigma_0,p}(\R^n)$,
   \begin{equation}\label{eq:limThm32}
      \lim_{\sigma\to0^{+}}
      \sigma\abs{v}_{l+\sigma,p,\R^n}^p=
      \frac{4\pi^{n/2}}{p\,\Gamma(n/2)}\abs{v}_{l,p,\R^n}^p.
   \end{equation}
\end{thm}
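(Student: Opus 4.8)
The plan is to reduce the general case $l\in\N$ to the case $l=0$ exactly as in the proof of Theorem \ref{thm33}: since $v\in W^{l+\sigma_0,p}(\R^n)$ has $\partial^\alpha v\in W^{\sigma_0,p}(\R^n)$ for $\abs{\alpha}=l$, and $\abs{v}_{l+\sigma,p,\R^n}^p=\sum_{\abs{\alpha}=l}\abs{\partial^\alpha v}_{\sigma,p,\R^n}^p$, it suffices to prove
\begin{equation*}
   \lim_{\sigma\to0^{+}}\sigma\abs{w}_{\sigma,p,\R^n}^p
   =\frac{4\pi^{n/2}}{p\,\Gamma(n/2)}\abs{w}_{0,p,\R^n}^p
\end{equation*}
for $w\in W^{\sigma_0,p}(\R^n)$, and then sum over $\alpha$; note that $\sum_{\abs{\alpha}=l}\abs{\partial^\alpha v}_{0,p,\R^n}^p=\abs{v}_{l,p,\R^n}^p$ by definition. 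So the whole content is the case $l=0$, which is the Maz'ya--Shaposhnikova limit \eqref{eq:MS}; indeed, from \eqref{eq:MS} and the fact that $\abs{S_{n-1}}=2\pi^{n/2}/\Gamma(n/2)$ one gets the constant $2p^{-1}\cdot2\pi^{n/2}/\Gamma(n/2)=4\pi^{n/2}/(p\,\Gamma(n/2))$. Thus the cleanest route is simply to invoke \cite{MazSha02} for $l=0$ and perform the reduction above.

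For completeness I would, however, sketch a self-contained argument for $l=0$. Write $\abs{w}_{\sigma,p,\R^n}^p=\int_{\R^n\times\R^n}\abs{w(x)-w(y)}^p\abs{x-y}^{-n-p\sigma}\,dx\,dy$ and split the inner integral in $y$ (equivalently in $h=y-x$) into $\abs{h}\le1$ and $\abs{h}\ge1$. On $\abs{h}\ge1$ one uses $\abs{w(x)-w(x+h)}^p\le 2^{p-1}(\abs{w(x)}^p+\abs{w(x+h)}^p)$, integrates in $x$ using translation invariance of the $L^p$ norm, and is left with $2^{p}\abs{w}_{0,p,\R^n}^p\int_{\abs{h}\ge1}\abs{h}^{-n-p\sigma}\,dh=2^{p}\abs{w}_{0,p,\R^n}^p\,\abs{S_{n-1}}\int_1^\infty r^{-1-p\sigma}\,dr=2^{p}\abs{w}_{0,p,\R^n}^p\,\abs{S_{n-1}}/(p\sigma)$. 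Multiplying by $\sigma$ and letting $\sigma\to0^{+}$ gives the value $2^{p}p^{-1}\abs{S_{n-1}}\abs{w}_{0,p,\R^n}^p$ in the limit — but this overcounts, since it should be $2p^{-1}\abs{S_{n-1}}\abs{w}_{0,p,\R^n}^p$; the discrepancy is precisely the cross term $-2\,\mathrm{Re}\!\int w(x)\overline{w(x+h)}$-type contribution (for $p=2$ it is exactly this, and for general $p$ one must argue that replacing $\abs{w(x)-w(x+h)}^p$ by $\abs{w(x)}^p+\abs{w(x+h)}^p$ produces the correct asymptotics because the "interaction" term is negligible after multiplication by $\sigma$). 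Making this rigorous is the real work: one shows $\sigma\int_{\R^n}\int_{\abs{h}\ge1}\bigl(\abs{w(x)-w(x+h)}^p-\abs{w(x)}^p-\abs{w(x+h)}^p\bigr)\abs{h}^{-n-p\sigma}\,dh\,dx\to -2p^{-1}\abs{S_{n-1}}\abs{w}_{0,p,\R^n}^p$ is NOT what happens; rather, the correct statement (Maz'ya--Shaposhnikova) is that the $\abs{h}\ge1$ piece alone, with the exact difference $\abs{w(x)-w(x+h)}^p$, satisfies $\sigma\int\!\!\int_{\abs{h}\ge1}\cdots\to 2p^{-1}\abs{S_{n-1}}\abs{w}_{0,p,\R^n}^p$, using that for large $\abs{h}$ the overlap of $w(\cdot)$ and $w(\cdot+h)$ disappears in $L^p$ (by density of $C_c^\infty$), so $\int_{\R^n}\abs{w(x)-w(x+h)}^p\,dx\to 2\abs{w}_{0,p,\R^n}^p$ as $\abs{h}\to\infty$; combined with $\sigma\int_{\abs{h}\ge1}\abs{h}^{-n-p\sigma}\,dh\to p^{-1}\abs{S_{n-1}}$ and a dominated-convergence / Tauberian argument, this yields the limit.

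For the complementary piece $\abs{h}\le1$ one must show $\sigma\int_{\R^n}\int_{\abs{h}\le1}\abs{w(x)-w(x+h)}^p\abs{h}^{-n-p\sigma}\,dh\,dx\to0$: this is where the hypothesis $w\in W^{\sigma_0,p}(\R^n)$ with $\sigma_0>0$ enters, since $\int_{\abs{h}\le1}\abs{w(x)-w(x+h)}^p\abs{h}^{-n-p\sigma}\,dh\le\int_{\abs{h}\le1}\abs{w(x)-w(x+h)}^p\abs{h}^{-n-p\sigma_0}\,dh$ for $\sigma\le\sigma_0$, and the $x$-integral of the right side is $\abs{w}_{\sigma_0,p,\R^n}^p<\infty$, so multiplying by $\sigma$ kills it. I expect this last uniform-integrability bound and the Tauberian passage to the limit in the far-field piece to be the main obstacle; everything else is bookkeeping and the reduction from $l$ to $0$ is immediate.
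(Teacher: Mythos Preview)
Your main route---invoking \cite{MazSha02} for $l=0$ (with the constant $2p^{-1}\abs{S_{n-1}}=4\pi^{n/2}/(p\,\Gamma(n/2))$) and then reducing $l\ge1$ to $l=0$ via $\abs{v}_{l+\sigma,p,\R^n}^p=\sum_{\abs{\alpha}=l}\abs{\partial^\alpha v}_{\sigma,p,\R^n}^p$---is exactly what the paper does. Your supplementary self-contained sketch (near-field bounded uniformly by $\abs{w}_{\sigma_0,p,\R^n}^p$ and hence killed by the factor $\sigma$; far-field handled via $\int_{\R^n}\abs{w(x)-w(x+h)}^p\,dx\to 2\abs{w}_{0,p,\R^n}^p$ as $\abs{h}\to\infty$ together with the probability-type measure $p\sigma\abs{S_{n-1}}^{-1}\abs{h}^{-n-p\sigma}\,dh$ on $\{\abs{h}\ge1\}$ escaping to infinity) is essentially the Maz'ya--Shaposhnikova argument itself and is sound once you discard the $2^p$-overcount detour.
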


\begin{proof}
   Maz'ya and Shaposhnikova proved in \cite[Theorem~3]{MazSha02} that
   \eqref{eq:MS} holds for any $v$ belonging to
   $\bigcup_{0<\sigma<1}W^{\sigma,p}_0(\R^n)$, where
   $W^{\sigma,p}_0(\R^n)$ stands for the completion of
   $C^\infty_0(\R^n)$ with respect to
   $\abs{\,\cdot\,}_{\sigma,p,\R^n}$ (which is a norm in this last
   space). The condition on $v$ can be relaxed to
   $v\in\bigcup_{0<\sigma<\sigma_0}W^{\sigma,p}_0(\R^n)$ for some
   $\sigma_0\in(0,1)$. Likewise, since $C^\infty_0(\R^n)$ is dense in
   $W^{\sigma,p}(\R^n)$ with respect to
   $\norm{\,\cdot\,}_{\sigma,p,\R^n}=\bigl(\abs{\,\cdot\,}_{0,p,\R^n}^p
   +\abs{\,\cdot\,}_{\sigma,p,\R^n}^p\bigr)^{1/p}$, it follows that
   $W^{\sigma,p}(\R^n)\subset W^{\sigma,p}_0(\R^n)$. Thus, taking into
   account the embedding $W^{\sigma_0,p}(\R^n)\hookrightarrow
   W^{\sigma,p}(\R^n)$, if $\sigma_0\geq\sigma$, and that
   $\abs{S_{n-1}}=2\pi^{n/2}/\Gamma(n/2)$, we conclude that, for
   $l=0$, \eqref{eq:limThm32} follows from Maz'ya and Shaposhnikova's
   result.
   
   Now, let us assume that $l\geq 1$. Given $v\in 
   W^{l+\sigma_0,p}(\R^n)$, it is clear that any $l$th-derivative 
   $\partial^\alpha v$ belongs to $W^{\sigma_0,p}(\R^n)$. The 
   case $l=0$ implies that
   \begin{multline*}
      \lim_{\sigma\to0^{+}}\sigma\abs{v}_{l+\sigma,p,\R^n}^p
      =\lim_{\sigma\to0^{+}} \sigma \sum_{\abs{\alpha}=l}
      \abs{\partial^\alpha v}_{\sigma,p,\R^n}^p \\
      =\sum_{\abs{\alpha}=l} \lim_{\sigma\to0^{+}} 
      \sigma\abs{\partial^\alpha v}_{\sigma,p,\R^n}^p
      =\sum_{\abs{\alpha}=l} \dfrac{4\pi^{n/2}}{p\,\Gamma(n/2)}
      \abs{\partial^\alpha v}_{0,p,\R^n}^p
      =\dfrac{4\pi^{n/2}}{p\,\Gamma(n/2)}
      \abs{v}_{l,p,\R^n}^p.
   \end{multline*}
   The theorem follows.
\end{proof}

As we shall next see, there exists a qualitative difference in the
behaviour of $\abs{v}_{l+\sigma,p,\Omega}$ as $\sigma\to0^{+}$
depending on whether $\Omega$ is $\R^n$ or a bounded set.
Theorem~\ref{thm32} implies that the semi-norm
$\abs{v}_{l+\sigma,p,\R^n}$ blows up to infinity (except for
polynomials of degree $\leq l$) as $\sigma\to0^{+}$. However, for a
bounded set $\Omega$, a priori, the semi-norm
$\abs{v}_{l+\sigma,p,\Omega}$ may remain bounded. In fact, this is
always the case. Even more, as $\sigma\to0^{+}$, that semi-norm tends
to Dini's semi-norm $\abs{v}_{l,\mathrm{Dini}(p),\Omega}$, defined, 
following Milman~\cite{Mil05}, by
\begin{equation*}
  \abs{v}_{l,\mathrm{Dini}(p),\Omega}^p
  =\sum_{\abs{\alpha}=l}\int_{\Omega\times\Omega}
  \frac{\abs{\partial^\alpha v(x)-\partial^\alpha
  v(y)}^p}{\abs{x-y}^n}\,dx\,dy.
\end{equation*}
Let us state and establish this result. We borrow the arguments from 
Milman~\cite[Theorem~3 and Example~2]{Mil05}.

\begin{thm}\label{thm35}
   Let $\Omega$ be a bounded domain of $\R^n$ with a Lipschitz
   continuous boundary. Let $p\in[1,\infty)$, $l\in\N$ and
   $\sigma_0\in(0,1)$. Then, for any $v\in W^{l+\sigma_0,p}(\Omega)$,
   we have $\abs{v}_{l,\mathrm{Dini}(p),\Omega}<\infty$ and 
   \begin{equation*}
      \lim_{\sigma\to0^{+}}\abs{v}_{l+\sigma,p,\Omega}
      =\abs{v}_{l,\mathrm{Dini}(p),\Omega}.
   \end{equation*}
\end{thm}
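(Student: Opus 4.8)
The plan is to reduce the statement to the case $l=0$ and then to establish the convergence of the relevant integrals by Lebesgue's dominated convergence theorem. For the reduction, I would note that if $v\in W^{l+\sigma_0,p}(\Omega)$ then $v\in W^{l,p}(\Omega)$ and, for every multi-index $\alpha$ with $\abs{\alpha}=l$, the derivative $\partial^\alpha v$ belongs to $L^p(\Omega)$ and satisfies $\abs{\partial^\alpha v}_{\sigma_0,p,\Omega}<\infty$, so $\partial^\alpha v\in W^{\sigma_0,p}(\Omega)$. Since both $\abs{v}_{l+\sigma,p,\Omega}^p$ and $\abs{v}_{l,\mathrm{Dini}(p),\Omega}^p$ are \emph{finite} sums over $\{\abs{\alpha}=l\}$ of the corresponding quantities for $\partial^\alpha v$, it suffices to prove that for every $v\in W^{\sigma_0,p}(\Omega)$ one has $\abs{v}_{0,\mathrm{Dini}(p),\Omega}<\infty$ and $\abs{v}_{\sigma,p,\Omega}\to\abs{v}_{0,\mathrm{Dini}(p),\Omega}$ as $\sigma\to0^+$.

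So fix such a $v$. For $0<\sigma<\sigma_0$ put $f_\sigma(x,y)=\abs{v(x)-v(y)}^p\abs{x-y}^{-n-p\sigma}$ on $\Omega\times\Omega$; since $\abs{x-y}^{-p\sigma}\to1$ for every $x\ne y$, we have $f_\sigma\to f_0:=\abs{v(x)-v(y)}^p\abs{x-y}^{-n}$ pointwise a.e. To dominate the family I would split $\Omega\times\Omega$ according to whether $\abs{x-y}\le1$ or not and use
\[
   g(x,y)=\begin{cases}
      \abs{v(x)-v(y)}^p\,\abs{x-y}^{-n-p\sigma_0}, & \text{if }\abs{x-y}\le1,\\
      \abs{v(x)-v(y)}^p, & \text{if }\abs{x-y}>1.
   \end{cases}
\]
Indeed $\abs{x-y}^{-p\sigma}\le\abs{x-y}^{-p\sigma_0}$ when $\abs{x-y}\le1$, while $\abs{x-y}^{-n-p\sigma}\le1$ when $\abs{x-y}>1$; hence $0\le f_\sigma\le g$ for all $\sigma\in(0,\sigma_0)$, and also $f_0\le g$. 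The majorant $g$ is integrable on $\Omega\times\Omega$: its integral over $\{\abs{x-y}\le1\}$ is at most $\abs{v}_{\sigma_0,p,\Omega}^p<\infty$, and over $\{\abs{x-y}>1\}$ it is at most $2^p\abs{\Omega}\int_\Omega\abs{v(x)}^p\,dx<\infty$ because $\Omega$ is bounded and $v\in L^p(\Omega)$.

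From $f_0\le g$ we conclude that $\abs{v}_{0,\mathrm{Dini}(p),\Omega}^p=\int_{\Omega\times\Omega}f_0<\infty$, and dominated convergence applied to $(f_\sigma)$ with majorant $g$ gives $\abs{v}_{\sigma,p,\Omega}^p=\int_{\Omega\times\Omega}f_\sigma\to\int_{\Omega\times\Omega}f_0=\abs{v}_{0,\mathrm{Dini}(p),\Omega}^p$ as $\sigma\to0^+$; taking $p$th roots settles the case $l=0$, and hence the theorem. I expect the only real obstacle to be the choice of the majorant: one cannot simply apply monotone convergence globally, because $t\mapsto t^{-p\sigma}$ is not monotone in $\sigma$ uniformly for $t\in(0,\diam\Omega]$ once $\diam\Omega>1$. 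This is exactly what forces the decomposition above, in which over $\{\abs{x-y}\le1\}$ one controls $f_\sigma$ by the intrinsic $W^{\sigma_0,p}$ semi-norm, whereas over $\{\abs{x-y}>1\}$ — a region that is empty when $\diam\Omega\le1$ — one only uses that $v\in L^p(\Omega)$ and $\abs{\Omega}<\infty$.
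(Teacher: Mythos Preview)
Your proof is correct. Both you and the paper reduce to $l=0$ and then apply Lebesgue's dominated convergence theorem; the only difference lies in how the majorant is obtained. The paper first rescales $\Omega$ by its diameter $R$ to obtain $\widehat\Omega$ with $\diam\widehat\Omega=1$; on $\widehat\Omega$ one has $\abs{\hat x-\hat y}\le 1$ everywhere, so $\sigma\mapsto\abs{\hat x-\hat y}^{-p\sigma}$ is globally monotone and the $\sigma_0$-integrand itself serves as a single dominant over all of $\widehat\Omega\times\widehat\Omega$. The result is then transferred back via the scaling identities $\abs{v}_{\sigma,p,\Omega}=R^{-\sigma+n/p}\abs{\hat v}_{\sigma,p,\widehat\Omega}$. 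Your approach avoids the rescaling by splitting $\Omega\times\Omega$ along $\{\abs{x-y}=1\}$ and invoking, on the far region, only $v\in L^p(\Omega)$ and $\abs{\Omega}<\infty$. Both arguments are equally elementary; the paper's rescaling trick yields a cleaner majorant (one integrand) at the cost of an extra change of variables, while your decomposition is more direct but splits the estimate into two pieces.
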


\begin{proof}
   As in previous results, it suffices to prove the case $l=0$. Let
   $R$ be the diameter of $\Omega$. We consider the bijective linear
   mapping $F:\R^n\to\R^n$ given by $F(\hat{x})=R\,\hat{x}$ and we
   write $\widehat{\Omega}=F^{-1}(\Omega)$. Since $R=\diam\Omega$, it
   is clear that $\diam\widehat{\Omega}=1$. Thus,
   \begin{equation*}
      \forall \sigma\in(0,\sigma_0),\ \forall 
      \hat{x},\hat{y}\in\widehat{\Omega},
      \ 1\geq \abs{\hat{x}-\hat{y}}^{\sigma}
      \geq\abs{\hat{x}-\hat{y}}^{\sigma_0}.
   \end{equation*}
   Consequently, given $\hat{v}\in W^{\sigma_0,p}(\widehat{\Omega})$,
   we have
   \begin{equation*}
      \forall \sigma\in(0,\sigma_0),
      \ \abs{\hat{v}}_{0,\mathrm{Dini}(p),\widehat{\Omega}}^p
      \leq\abs{\hat{v}}_{\sigma,p,\widehat{\Omega}}^p
      \leq\abs{\hat{v}}_{\sigma_0,p,\widehat{\Omega}}^p<\infty.
   \end{equation*}
   Hence, by the Lebesgue's Dominated Convergence Theorem, we get
   \begin{align*}
      \lim_{\sigma\to0^{+}}\abs{\hat{v}}_{\sigma,p,\widehat{\Omega}}^p
      &=\lim_{\sigma\to0^{+}} \int_{\widehat{\Omega}\times\widehat{\Omega}}
       \frac{\abs{\hat{v}(\hat{x})-\hat{v}(\hat{y})}^p}%
         {\abs{\hat{x}-\hat{y}}^{n+p\sigma}}\,d\hat{x}\,d\hat{y} \\
      &=\int_{\widehat{\Omega}\times\widehat{\Omega}}\lim_{\sigma\to0^{+}}
       \frac{\abs{\hat{v}(\hat{x})-\hat{v}(\hat{y})}^p}%
         {\abs{\hat{x}-\hat{y}}^{n+p\sigma}}\,d\hat{x}\,d\hat{y} \\
       &=\int_{\widehat{\Omega}\times\widehat{\Omega}}
       \frac{\abs{\hat{v}(\hat{x})-\hat{v}(\hat{y})}^p}%
         {\abs{\hat{x}-\hat{y}}^n}\,d\hat{x}\,d\hat{y}
      =\abs{\hat{v}}_{0,\mathrm{Dini}(p),\widehat{\Omega}}^p.
   \end{align*}
   Now, for any $v\in W^{\sigma_0,p}(\Omega)$, the function 
   $\hat{v}=v\circ F$ belongs to $W^{\sigma_0,p}(\widehat{\Omega})$, 
   since 
   \begin{equation*}
     \abs{v}_{\sigma_0,p,\Omega}=R^{-\sigma_0+n/p}
     \abs{\hat{v}}_{\sigma_0,p,\widehat{\Omega}}.
   \end{equation*}
   Likewise, 
   \begin{equation*}
     \abs{v}_{0,\mathrm{Dini}(p),\Omega}=R^{n/p}
     \abs{\hat{v}}_{0,\mathrm{Dini}(p),\widehat{\Omega}}
   \end{equation*}
   and, for any $\sigma\in(0,\sigma_0)$, 
   \begin{equation*}
      \abs{v}_{\sigma,p,\Omega}=R^{-\sigma+n/p}
      \abs{\hat{v}}_{\sigma,p,\widehat{\Omega}}.
   \end{equation*}
   From these relations, we deduce that 
   $\abs{v}_{0,\mathrm{Dini}(p),\Omega}$ is finite and that
   \begin{equation*}
      \lim_{\sigma\to0^{+}}\abs{v}_{\sigma,p,\Omega}
      = \lim_{\sigma\to0^{+}} R^{-\sigma+n/p}
      \abs{\hat{v}}_{\sigma,p,\widehat{\Omega}}
      = R^{n/p}\abs{\hat{v}}_{0,\mathrm{Dini}(p),\widehat{\Omega}}
      =\abs{v}_{0,\mathrm{Dini}(p),\Omega}.
   \end{equation*}
   The proof is complete.
\end{proof}

\begin{rem}
   It is worth noting that, under the conditions of
   Theorem~\ref{thm35}, the arguments in its proof lead, in general,
   to the following bound:
   \begin{equation*}
      \forall v\in W^{l+\sigma_0,p}(\Omega),
      \ \abs{v}_{l,\mathrm{Dini}(p),\Omega}
      \leq R^\sigma \abs{v}_{l+\sigma,p,\Omega}
      \leq R^{\sigma_0} \abs{v}_{l+\sigma_0,p,\Omega},
   \end{equation*}
   with $R=\diam\Omega$.
\end{rem}

\begin{rem}\label{rem:Dini}
   By a change of variables and Fubini's Theorem, it can be seen that
   \begin{equation*}
      \abs{v}_{0,\mathrm{Dini}(p),\Omega}=\left(n\int_0^{+\infty}
      \frac{\overline{\omega}(v,t)_p^p}{t}\,dt\right)^{1/p},
   \end{equation*}
   where $\overline{\omega}(v,t)_p$ is the averaged modulus of 
   smoothness, given by
   \begin{equation*}
      \overline{\omega}(v,t)_p^p= t^{-n}\int_{\abs{h}\leq t}
      \abs{\Delta_h v}_{0,p,\Omega}^p\,dh,\qquad t>0,
   \end{equation*}
   with $\Delta_h v(x)=v(x+h)-v(x)$, if $x,x+h\in\Omega$, and
   $\Delta_hf(x)=0$, otherwise.
   Hence, for $l=0$, Theorem~\ref{thm35} establishes that, for any
   $v\in W^{\sigma_0,p}(\Omega)$, the function
   $\overline{\omega}(v,\,\cdot\,)_p$ satisfies a Dini-type
   condition. Analogous comments can be made for $l>0$. This justifies
   the name given to the semi-norm
   $\abs{\,\cdot\,}_{l,\mathrm{Dini}(p),\Omega}$.
   Likewise, since $\overline{\omega}(v,t)_p$ is equivalent to the 
   usual modulus of smoothness 
   $\omega(v,t)_p=\sup_{\abs{h}\leq t}\abs{\Delta_h v}_{0,p,\Omega}$,
   Theorem~\ref{thm35} comprises as a particular case the result given
   by Milman (cf.~\cite[Example~2]{Mil05}).
\end{rem}

\begin{rem}
   The semi-norm $\abs{\,\cdot\,}_{r,p,\R^n}$ can be normalized as 
   follows:
   \begin{equation}\label{eq:normalizRn}
      \babs{v}_{r,p,\R^n}=\lambda_{\sigma,p}\abs{v}_{r,p,\R^n},
   \end{equation}
   where $\sigma=r-\floor{r}$ and
   \begin{equation*}
      \lambda_{\sigma,p}=
      \begin{cases}
         \bigl(\sigma(1-\sigma)\bigr)^{1/p},
            & \text{if } \sigma\in(0,1),  \\
         1, & \text{if } \sigma=0.
      \end{cases}
   \end{equation*}
   Then, the semi-norm $\babs{\,\cdot\,}_{r,p,\R^n}$ is continuous in 
   the scale of Sobolev spaces $\bigl(W^{r,p}(\R^n)\bigr)_{r\geq0}$ 
   in the following sense:
   \begin{align*}
      \forall r>0,\ \forall v\in W^{r,p}(\R^n),
      \ \lim_{s\to r^{-}}\babs{v}_{s,p,\R^n}&\approx
      \babs{v}_{r,p,\R^n},  \\
      \forall r\geq0,\ \forall\epsilon>0,
      \ \forall v\in W^{r+\epsilon,p}(\R^n),
      \ \lim_{s\to r^{+}}\babs{v}_{s,p,\R^n}&\approx
      \babs{v}_{r,p,\R^n},
   \end{align*}
   where the symbol $\approx$ means that there exist two positive 
   constants $c_1$ and $c_2$, independent of $v$,  such that
   \begin{equation*}
      c_1 \babs{v}_{r,p,\R^n}\leq \lim_{s\to r^{\pm}}\babs{v}_{s,p,\R^n}
      \leq c_2 \babs{v}_{r,p,\R^n}.
   \end{equation*}
   In fact, if $r\notin\N$, both lateral limits are equal to
   $\babs{v}_{r,p,\R^n}$. For $r\in\N$, these relations are direct
   consequences of Theorems~\ref{thm31} and~\ref{thm32}, whereas, for
   $r\notin\N$, they come from the Lebesgue's Dominated Convergence
   Theorem.
   
   For a bounded domain $\Omega\subset\R^n$ with a Lipschitz
   continuous boundary, we could also consider the normalization
   $\babs{v}_{r,p,\Omega}=\lambda_{\sigma,p}\abs{v}_{r,p,\Omega}$.
   But, due to Theorem~\ref{thm35}, for any $r\in\N$, we would get
   \[
      \forall\epsilon>0,
      \ \forall v\in W^{r+\epsilon,p}(\Omega),
      \ \lim_{s\to r^{+}}\babs{v}_{s,p,\Omega}=0,
   \]
   which is quite unnatural. A better normalization is
   \begin{equation*}
      \cabs{v}_{r,p,\Omega}=(1-\sigma)^{1/p}\abs{v}_{r,p,\Omega},
   \end{equation*}
   with $\sigma=r-\floor{r}$.
   We now have:
   \begin{align*}
      \forall r>0,\ r\notin\N,\ \forall v\in W^{r,p}(\Omega),
      \ \lim_{s\to r^{-}}\cabs{v}_{s,p,\Omega}&\approx
      \cabs{v}_{r,p,\Omega}, \\
      \forall r\geq0,\ \forall\epsilon>0,
      \ \forall v\in W^{r+\epsilon,p}(\Omega),
      \ \lim_{s\to r^{+}}\cabs{v}_{s,p,\Omega}&\approx
      \begin{cases}
        \cabs{v}_{r,p,\Omega},& \text{if }r\notin\N, \\
	\abs{v}_{r,\mathrm{Dini}(p),\Omega},& \text{if }r\in\N.
      \end{cases}
   \end{align*}
   Observe that, given $r\in\N$ and $\varepsilon>0$, the semi-norms
   $\abs{\,\cdot\,}_{r,\mathrm{Dini}(p),\Omega}$ and
   $\abs{\,\cdot\,}_{r,p,\Omega}$ are not equivalent on
   $W^{r+\epsilon,p}(\Omega)$
   ($\abs{\,\cdot\,}_{r,\mathrm{Dini}(p),\Omega}$ is null for
   polynomials of degree $\leq r$, while
   $\abs{\,\cdot\,}_{r,p,\Omega}$ is null only for polynomials of
   degree $\leq r-1$). Consequently, the semi-norm
   $\cabs{\,\cdot\,}_{r,p,\Omega}$ is not right-continuous for
   $r\in\N$.
\end{rem}


\section{The particular case $p=2$}\label{s4}

The purpose of this section is to provide an alternative proof of
Theorems~\ref{thm31} and~\ref{thm32} based on the Fourier transform. 
We start with several preliminary results.

\begin{lem}\label{lem41}
   For any $n\in\N$ and $\sigma\in(0,1)$, there exists a positive
   constant $\Gsn$ such that
   \begin{equation}\label{eq2:lem41}
      \forall\xi\in\R^n,
      \ \int_{\R^n}\frac{\abs{e^{i\xi.y}-1}^2}{\abs{y}^{n+2\sigma}}
      \,dy=\Gsn\,\abs{\xi}^{2\sigma}.
   \end{equation}
\end{lem}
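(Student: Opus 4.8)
The plan is to exploit the scaling and rotational symmetries of the integral, after first checking that it makes sense. Denote by $I(\xi)$ the left-hand side of \eqref{eq2:lem41}. First I would verify that $I(\xi)<\infty$ for every $\xi\in\R^n$: near the origin one uses the elementary bound $\abs{e^{i\xi\cdot y}-1}\leq\abs{\xi\cdot y}\leq\abs{\xi}\,\abs{y}$, so the integrand is dominated by $\abs{\xi}^2\abs{y}^{2-n-2\sigma}$, which is integrable on a neighbourhood of $0$ because $2-2\sigma>0$; away from the origin one uses $\abs{e^{i\xi\cdot y}-1}^2\leq4$, so the integrand is dominated by $4\abs{y}^{-n-2\sigma}$, which is integrable near infinity because $2\sigma>0$.

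For $\xi=0$ both sides of \eqref{eq2:lem41} vanish, so it suffices to treat $\xi\neq0$. Next I would establish the \emph{rotational invariance} of $I$: given an orthogonal matrix $Q$, the change of variables $y\mapsto Qy$ has unit Jacobian and satisfies $\abs{Qy}=\abs{y}$ and $\xi\cdot(Qy)=(Q^{\mathsf T}\xi)\cdot y$, hence $I(\xi)=I(Q^{\mathsf T}\xi)$; so $I(\xi)$ depends only on $\abs{\xi}$. Then I would establish the \emph{homogeneity of degree $2\sigma$}: for $t>0$, the change of variables $y\mapsto t^{-1}y$ sends $dy$ to $t^{-n}\,dy$, sends $\abs{y}^{n+2\sigma}$ to $t^{-(n+2\sigma)}\abs{y}^{n+2\sigma}$, and satisfies $(t\xi)\cdot(t^{-1}y)=\xi\cdot y$, whence $I(t\xi)=t^{n+2\sigma-n}I(\xi)=t^{2\sigma}I(\xi)$. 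Combining the two, and fixing once and for all a unit vector $\nu\in\R^n$, we obtain $I(\xi)=I(\abs{\xi}\,\nu)=\abs{\xi}^{2\sigma}I(\nu)$.

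It then remains to set $\Gsn=I(\nu)=\int_{\R^n}\abs{e^{i\nu\cdot y}-1}^2\abs{y}^{-n-2\sigma}\,dy$, which is well defined and independent of the choice of $\nu$ by the rotational invariance just proved, and to note that $\Gsn>0$ since the integrand is nonnegative and strictly positive on a set of positive measure (e.g.\ wherever $\nu\cdot y\notin2\pi\Z$). This gives \eqref{eq2:lem41}. I do not expect a genuine obstacle here: the argument is a standard symmetry computation, and the only point needing a little care is the integrability near the origin, which is precisely where the hypothesis $\sigma<1$ enters (while $\sigma>0$ is needed for integrability at infinity).
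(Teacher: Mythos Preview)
Your argument is correct and complete for the lemma as stated: integrability near~$0$ and near~$\infty$ is checked properly, and the rotation-plus-scaling symmetry argument gives $I(\xi)=\abs{\xi}^{2\sigma}I(\nu)$ with $\Gsn=I(\nu)>0$.

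The paper's proof reaches the same conclusion by a more explicit route. Rather than appealing to abstract rotational invariance, it performs the scaling substitution $x=\abs{\xi}y$, then passes to polar coordinates $x=\rho\omega$, uses $\abs{e^{i\rho\nu\cdot\omega}-1}^2=2(1-\cos(\rho\nu\cdot\omega))$, and with the further substitution $t=\rho\abs{\nu\cdot\omega}$ obtains the factorisation
\[
   \Gsn=K_{2\sigma,n}\,M_\sigma,\qquad
   M_\sigma=\int_0^{+\infty}\frac{2(1-\cos t)}{t^{1+2\sigma}}\,dt,
\]
with $K_{2\sigma,n}$ as in \eqref{eq:defKpn}. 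Your symmetry proof is shorter and more conceptual, but it leaves $\Gsn$ as an unevaluated integral $I(\nu)$. The paper's computation buys the explicit factorisation, which is then exploited in Remark~\ref{rem41} to derive the closed form \eqref{eq1:lem41} for $\Gsn$; that closed form is what makes the limits $\lim_{\sigma\to0^{+}}\sigma\Gsn$ and $\lim_{\sigma\to1^{-}}(1-\sigma)\Gsn$ in the proof of Theorem~\ref{thm1} immediately readable. If you were to continue with your approach, you would still need to evaluate $I(\nu)$ at some point to recover those limiting constants.
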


\begin{proof}
   The relation \eqref{eq2:lem41} is obviously true if $\xi=0$, so 
   let us assume that $\xi\ne0$. Let $\nu=\xi/\abs{\xi}$. We have
   \begin{align*}
      \int_{\R^n}&\frac{\abs{e^{i\xi\cdot y}-1}^2}{\abs{y}^{n+2\sigma}}
         \,dy \\
      &=\abs{\xi}^{2\sigma}
        \int_{\R^n}\frac{\abs{e^{i\nu\cdot x}-1}^2}{\abs{x}^{n+2\sigma}}\,dx
        && (\text{change $x=\abs{\xi}y$}) \\
     &=\abs{\xi}^{2\sigma}\int_{S_{n-1}} \left(\int_0^{+\infty}\rho^{n-1}
       \frac{\abs{e^{i\rho \nu\cdot\omega}-1}^2}%
       {\abs{\rho \omega}^{n+2\sigma}}\,d\rho
       \right)d\omega \\
     &=\abs{\xi}^{2\sigma}\int_{S_{n-1}} \left(\int_0^{+\infty}
       \frac{\abs{e^{i\rho \nu\cdot\omega}-1}^2}{\rho^{1+2\sigma}}\,d\rho
       \right)d\omega  \\
     &=\abs{\xi}^{2\sigma}\int_{S_{n-1}} \left(\int_0^{+\infty}
       \frac{2(1-\cos(\rho \nu\cdot\omega))}{\rho^{1+2\sigma}}\,d\rho
       \right)d\omega \\
     &=\abs{\xi}^{2\sigma}\int_{S_{n-1}} \left(\int_0^{+\infty}
       \frac{2(1-\cos(\rho \abs{\nu\cdot\omega}))}{\rho^{1+2\sigma}}\,d\rho
       \right)d\omega && \text{($\cos$ is even)} \\
     &=\abs{\xi}^{2\sigma}
       \int_{S_{n-1}} \abs{\nu\cdot\omega}^{2\sigma}\left(\int_0^{+\infty}
       \frac{2(1-\cos t)}{t^{1+2\sigma}}\,dt
       \right)d\omega && \text{(change $t=\rho\abs{\nu\cdot\omega}$)} \\
     &=\abs{\xi}^{2\sigma}K_{2\sigma,n}\,M_\sigma,
   \end{align*}
   where $K_{2\sigma,n}$ is given by \eqref{eq:defKpn} with
   $p=2\sigma$ and
   \begin{equation}\label{eq:Ms}
     M_\sigma=\int_0^{+\infty}
       \frac{2(1-\cos t)}{t^{1+2\sigma}}\,dt,
   \end{equation}
   which is convergent for any $\sigma\in(0,1)$. It then suffices
   to take $\Gsn=K_{2\sigma,n}\,M_\sigma$.
\end{proof}

\begin{rem}\label{rem41}
   For any $n\in\N$ and $\sigma\in(0,1)$, let us show that
   \begin{equation}\label{eq1:lem41}
      \Gsn=\frac{2\pi^{(n+1)/2}\Gamma(\sigma+1/2)}%
      {\Gamma(\sigma+n/2)\Gamma(1+2\sigma)\sin(\pi\sigma)}.
   \end{equation}
   Integrating by parts in \eqref{eq:Ms}, we get
   \begin{equation*}
     M_\sigma
     = -\frac{(1-\cos t)}{\sigma t^{2\sigma}}\bigg]^{+\infty}_0
     +\int_0^{+\infty}\frac{\sin t}{\sigma t^{2\sigma}}\,dt 
     =\frac{1}{\sigma}\int_0^{+\infty}\frac{\sin t}{t^{2\sigma}}\,dt.
   \end{equation*}
   This last integral can be computed in several ways. For example,
   the cases $\sigma\in(0,1/2)$, $\sigma=1$ and $\sigma\in(1/2,1)$ are
   covered, respectively, by relations 3.764.1, 3.741.2 and, after an
   integration by parts, 3.764.2 in Gradshteyn and
   Ryzhik~\cite{GraRyz07}. Using well-known properties of the Gamma
   function, as well as the identity
   \begin{equation*}
    \Gamma(z)\Gamma(1-z)=\frac{\pi}{\sin \pi z},\quad z\notin\Z,
   \end{equation*}
   we finally derive that
   \begin{equation}\label{eq3:lem41}
      M_\sigma=\frac{\pi}{\Gamma(1+2\sigma)\sin(\pi\sigma)},
   \end{equation}
   Since $\Gsn=K_{2\sigma,n}\,M_\sigma$, this relation,
   together with \eqref{eq:valKpn}, implies \eqref{eq1:lem41}.
\end{rem}

\begin{pro}[C. Goulaouic]\label{pro1}
   Let $\sigma\in(0,1)$. Then
   \begin{equation}\label{eq0:pro1}
      W^{\sigma,2}(\R^n)=L^2(\R^n)\cap\widetilde{H}^\sigma(\R^n),
   \end{equation}
   with
   \begin{equation}\label{eq:defH}
      \widetilde{H}^\sigma=\Bigl\{ v\in\mathcal{S}'(\R^n) \Bigm\vert  
      \hat{v}\in L^1_{\mathrm{loc}}(\R^n),\ 
      \int_{\R^n}\abs{\xi}^{2\sigma} 
      \abs{\hat{v}(\xi)}^2\,d\xi <\infty \Bigr\}.
   \end{equation}
   In fact, for any $v\in W^{\sigma,2}(\R^n)$,
   \begin{equation}\label{eq:pro1}
      \abs{v}_{\sigma,2,\R^n}^2=(2\pi)^{-n}\, \Gsn
      \int_{\R^n}\abs{\xi}^{2\sigma}\abs{\hat{v}(\xi)}^2\,d\xi,
   \end{equation}
   where $\Gsn$ is the constant given by Lemma~\ref{lem41}.
\end{pro}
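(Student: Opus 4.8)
The plan is to evaluate the Gagliardo double integral directly on the Fourier side, the two ingredients being Plancherel's theorem (together with the elementary translation rule for the Fourier transform) and Lemma~\ref{lem41}. Let $v\in L^2(\R^n)$. Writing $h=x-y$ and using Tonelli's theorem --- licit because the integrand is nonnegative and measurable, so no \emph{a priori} finiteness is needed --- I would first obtain
\[
   \abs{v}_{\sigma,2,\R^n}^2=\int_{\R^n}\frac{1}{\abs{h}^{n+2\sigma}}
   \Bigl(\int_{\R^n}\abs{v(y+h)-v(y)}^2\,dy\Bigr)dh
   =\int_{\R^n}\frac{\abs{v(\cdot+h)-v}_{0,2,\R^n}^2}{\abs{h}^{n+2\sigma}}\,dh,
\]
where the inner norm is finite since $v\in L^2(\R^n)$. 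By Plancherel's theorem, which with the convention of Section~\ref{s2} reads $\abs{w}_{0,2,\R^n}^2=(2\pi)^{-n}\abs{\hat w}_{0,2,\R^n}^2$, and the identity $\widehat{v(\cdot+h)}(\xi)=e^{ih\cdot\xi}\hat v(\xi)$, the numerator equals $(2\pi)^{-n}\int_{\R^n}\abs{e^{ih\cdot\xi}-1}^2\abs{\hat v(\xi)}^2\,d\xi$.

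A second application of Tonelli's theorem, again justified by nonnegativity, lets me interchange the integrations in $h$ and $\xi$:
\[
   \abs{v}_{\sigma,2,\R^n}^2=(2\pi)^{-n}\int_{\R^n}\abs{\hat v(\xi)}^2
   \Bigl(\int_{\R^n}\frac{\abs{e^{ih\cdot\xi}-1}^2}{\abs{h}^{n+2\sigma}}\,dh\Bigr)d\xi,
\]
and Lemma~\ref{lem41} identifies the bracketed integral with $\Gsn\abs{\xi}^{2\sigma}$, which is exactly \eqref{eq:pro1}. Read as a chain of equalities in $[0,+\infty]$, this same computation yields \eqref{eq0:pro1}: for $v\in L^2(\R^n)$ one has $v\in W^{\sigma,2}(\R^n)$ if and only if $\int_{\R^n}\abs{\xi}^{2\sigma}\abs{\hat v(\xi)}^2\,d\xi<\infty$, that is, if and only if $v\in\widetilde H^\sigma(\R^n)$. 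Note that the requirement $\hat v\in L^1_{\mathrm{loc}}(\R^n)$ in \eqref{eq:defH} is automatic here, since $v\in L^2(\R^n)$ already gives $\hat v\in L^2(\R^n)\subset L^1_{\mathrm{loc}}(\R^n)$.

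I do not expect a genuine obstacle; the argument is essentially bookkeeping. The two points deserving care are the normalization constant in Plancherel's identity under the Fourier convention fixed in Section~\ref{s2} (this is where the factor $(2\pi)^{-n}$ comes from) and the relation $\widehat{v(\cdot+h)}=e^{ih\cdot\xi}\hat v$ for $v\in L^2(\R^n)$, which follows from the case $v\in\mathcal{S}(\R^n)$ (a direct change of variables) by density of $\mathcal{S}(\R^n)$ in $L^2(\R^n)$ and continuity of the Fourier transform on $L^2(\R^n)$. As an alternative one could establish \eqref{eq:pro1} first for $v\in C_0^\infty(\R^n)$, where every manipulation is transparent, and then pass to the general case by density in $W^{\sigma,2}(\R^n)$ and in $L^2(\R^n)\cap\widetilde H^\sigma(\R^n)$; but the direct route via Tonelli's theorem avoids invoking these density facts.
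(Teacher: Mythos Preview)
Your proposal is correct and follows essentially the same route as the paper's proof: change of variables $h=x-y$, Plancherel/Parseval on the inner integral using the translation rule for the Fourier transform, then Fubini/Tonelli and Lemma~\ref{lem41}. The only cosmetic differences are that the paper writes ``Fubini'' where you (more precisely) invoke Tonelli, and it does not spell out the density justification for the translation identity.
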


\begin{proof}
   Let $v\in L^2(\R^n)$. We first remark that $v$ is, in particular, a
   tempered distribution and, by Plancherel's Theorem, $\hat{v}\in
   L^2(\R^n)$, so $\hat{v}\in L^1_{\mathrm{loc}}(\R^n)$. Thus, to
   prove \eqref{eq0:pro1}, it suffices to see that the semi-norm
   $\abs{v}_{\sigma,2,\R^n}$ is finite if and only if the integral
   $\int_{\R^n}\abs{\xi}^{2\sigma}\abs{\hat{v}(\xi)}^2\,d\xi$ is
   finite. But this is a consequence of \eqref{eq:pro1}. So let us
   show that \eqref{eq:pro1} holds. To this end, we follow the
   reasoning of Goulaouic~\cite[p.~101]{Gou81}.
   
   For any $y\in\R^n$, the Fourier transform of the translated
   function $x\mapsto v(x+y)$ is the function $\xi\mapsto
   e^{iy\cdot\xi}\,\hat{v}(\xi)$. Hence, by Parseval's identity, we
   have
   \begin{equation*}
      \int_{\R^n}\abs{v(x+y)-v(x)}^2\,dx=
      (2\pi)^{-n}\int_{\R^n}\abs{\hat{v}(\xi)}^2
      \abs{e^{iy\cdot\xi}-1}^2\,d\xi.
   \end{equation*}
   Then, by Fubini's Theorem and Lemma~\ref{lem41}, we finally deduce 
   that
  \begin{align*}
      \abs{v}_{\sigma,2,\R^n}^2
      &=\int_{\R^n\times\R^n}
         \frac{\abs{v(x+y)-v(x)}^2}{\abs{y}^{n+2\sigma}}\,dx\,dy \\
      &=(2\pi)^{-n}\int_{\R^n}\abs{\hat{v}(\xi)}^2
         \biggl(\int_{\R^n}
         \frac{\abs{e^{iy\cdot\xi}-1}^2}{\abs{y}^{n+2\sigma}} 
         \,dy\biggr)\,d\xi \\
      &=(2\pi)^{-n}\,\Gsn
         \int_{\R^n}\abs{\xi}^{2\sigma}\abs{\hat{v}(\xi)}^2\,d\xi,
  \end{align*}
  which yields \eqref{eq:pro1} and completes the proof.
\end{proof}

\begin{lem}\label{lem1a}
   Let $\sigma_0\in(0,1)$. Then, for any $v\in W^{\sigma_0,2}(\R^n)$,
   \begin{equation*}
      \lim_{\sigma\to0^{+}} \int_{\R^n}
      \abs{\xi}^{2\sigma}\abs{\hat{v}(\xi)}^2\,d\xi
      =(2\pi)^n\abs{v}_{0,2,\R^n}^2.
   \end{equation*}
\end{lem}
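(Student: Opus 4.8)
The plan is to reduce everything, via Plancherel's Theorem, to showing that $\int_{\R^n}\abs{\xi}^{2\sigma}\abs{\hat v(\xi)}^2\,d\xi$ converges to $\int_{\R^n}\abs{\hat v(\xi)}^2\,d\xi$ as $\sigma\to0^{+}$, and to establish this by splitting the domain of integration into the closed unit ball $B=\{\xi\in\R^n\mid\abs{\xi}\le1\}$ and its complement $\R^n\setminus B$, applying Lebesgue's Dominated Convergence Theorem on each of the two pieces separately.

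First I would observe that, since $v\in L^2(\R^n)$, Plancherel's Theorem gives $\hat v\in L^2(\R^n)$ together with the identity $(2\pi)^n\abs{v}_{0,2,\R^n}^2=\int_{\R^n}\abs{\hat v(\xi)}^2\,d\xi$; moreover $\abs{\xi}^{2\sigma}\to1$ as $\sigma\to0^{+}$ for every $\xi\in\R^n\setminus\{0\}$, hence for a.e.\ $\xi$. On $B$ one has $0\le\abs{\xi}^{2\sigma}\le1$ whenever $\sigma>0$, so the integrand $\abs{\xi}^{2\sigma}\abs{\hat v(\xi)}^2$ is bounded above there by $\abs{\hat v(\xi)}^2$, which is integrable over $B$. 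The Dominated Convergence Theorem then yields
\[
   \lim_{\sigma\to0^{+}}\int_{B}\abs{\xi}^{2\sigma}\abs{\hat v(\xi)}^2\,d\xi
   =\int_{B}\abs{\hat v(\xi)}^2\,d\xi.
\]

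For the exterior region $\R^n\setminus B$ the relevant input is Proposition~\ref{pro1}: from $v\in W^{\sigma_0,2}(\R^n)$ we get $v\in\widetilde H^{\sigma_0}(\R^n)$, so that, by \eqref{eq:defH}, $\int_{\R^n}\abs{\xi}^{2\sigma_0}\abs{\hat v(\xi)}^2\,d\xi<\infty$. Since $\abs{\xi}>1$ on $\R^n\setminus B$, for every $\sigma\in(0,\sigma_0]$ we have $\abs{\xi}^{2\sigma}\le\abs{\xi}^{2\sigma_0}$ there, and therefore the integrand is bounded above on $\R^n\setminus B$ by $\abs{\xi}^{2\sigma_0}\abs{\hat v(\xi)}^2$, which is integrable over $\R^n\setminus B$. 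A second application of the Dominated Convergence Theorem gives
\[
   \lim_{\sigma\to0^{+}}\int_{\R^n\setminus B}\abs{\xi}^{2\sigma}\abs{\hat v(\xi)}^2\,d\xi
   =\int_{\R^n\setminus B}\abs{\hat v(\xi)}^2\,d\xi.
\]
Adding the two limits produces $\int_{\R^n}\abs{\hat v(\xi)}^2\,d\xi=(2\pi)^n\abs{v}_{0,2,\R^n}^2$, which is the assertion. The only step that genuinely uses the hypothesis $v\in W^{\sigma_0,2}(\R^n)$—and hence the only step that is not a mechanical verification—is the construction of a $\sigma$-independent integrable majorant on $\R^n\setminus B$, where the pointwise bound $\abs{\xi}^{2\sigma}\abs{\hat v(\xi)}^2\le\abs{\xi}^{2\sigma_0}\abs{\hat v(\xi)}^2$ is available precisely because $\abs{\xi}>1$; on the ball $B$ the inequality goes the other way and the trivial bound by $\abs{\hat v}^2$ is enough.
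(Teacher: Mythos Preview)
Your proof is correct. The underlying idea coincides with the paper's: on low frequencies the trivial majorant $\abs{\hat v}^2$ works because $\hat v\in L^2(\R^n)$, while on high frequencies the majorant $\abs{\xi}^{2\sigma_0}\abs{\hat v}^2$ works because $v\in\widetilde H^{\sigma_0}(\R^n)$ via Proposition~\ref{pro1}. The execution differs, however. The paper carries out a manual $\varepsilon$-argument with a three-region decomposition $\{\abs{\xi}\le r\}\cup\{r<\abs{\xi}<R\}\cup\{\abs{\xi}\ge R\}$, choosing $r$ and $R$ depending on $\varepsilon$ and then handling the middle annulus by taking $\sigma$ small. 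Your route is more economical: a fixed two-region split at $\abs{\xi}=1$ together with two direct applications of Dominated Convergence avoids the $\varepsilon$-bookkeeping entirely and makes transparent why the hypothesis $v\in W^{\sigma_0,2}(\R^n)$ is needed only for the exterior piece.
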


\begin{proof}
   Let $v\in W^{\sigma_0,2}(\R^n)$. For any $\sigma\in(0,\sigma_0]$,
   let us consider the integral
   $I_{\sigma}=\int_{\R^n}g_{\sigma}(\xi)\,d\xi$, where
   $g_{\sigma}(\xi)=(1-\abs{\xi}^{2\sigma})\abs{\hat{v}(\xi)}^2$. This
   integral is well defined: since $v\in W^{\sigma_0,2}(\R^n)$, $v$
   also belongs to $L^2(\R^n)$ and $W^{\sigma,2}(\R^n)$, so 
   $\hat{v}\in L^2(\R^n)$ and, by Proposition~\ref{pro1},
   $v\in\widetilde{H}^\sigma(\R^n)$.
   
   Let $r$ and $R$ be numbers such that $0<r\leq 1<R$. We set
   \begin{equation*}
      I_{\sigma}=\int_{\abs{\xi}\leq r} g_{\sigma}(\xi)\,d\xi
      + \int_{r<\abs{\xi}<R} g_{\sigma}(\xi)\,d\xi
      +\int_{\abs{\xi}\geq R}g_{\sigma}(\xi)\,d\xi=J_1+J_2+J_3.
   \end{equation*}
   Let $\varepsilon>0$ be given. Let us show that we can choose $r$,
   $R$ and $\sigma\in (0,\sigma_0)$ such that
   $\abs{I_{\sigma}}<\varepsilon$. We have
   \begin{equation*}
      \abs{J_1}\leq \int_{\abs{\xi}\leq r}\abs{\hat{v}(\xi)}^2\,d\xi.
   \end{equation*}
   Clearly, $\abs{J_1}\leq \varepsilon/3$ for $r$ small enough, since
   $\hat{v}\in L^2(\R^n)$. Moreover,
   \begin{equation*}
      \abs{J_3}\leq \int_{\abs{\xi}\geq R}\abs{\hat{v}(\xi)}^2\,d\xi 
      + \int_{\abs{\xi}\geq R}\abs{\xi}^{2\sigma_0}
      \abs{\hat{v}(\xi)}^2\,d\xi,
   \end{equation*}
   and the two terms on the right member are arbitrarily small when $R$ is
   large enough, the first, because $\hat{v}\in L^2(\R^n)$ and the
   second, because, by Proposition \ref{pro1},
   $v\in\widetilde{H}^{\sigma_0}(\R^n)$. So, $\abs{J_3}<\varepsilon/3$
   for $R$ sufficiently large. Once $r$ and $R$ chosen, it suffices to
   take $\sigma$ small enough to achieve $\abs{J_2}<\varepsilon/3$.
  
   The preceding reasoning implies that
   \begin{equation*}
      \lim_{\sigma\to0^{+}}\int_{\R^n}g_{\sigma}(\xi)\,d\xi=0.
   \end{equation*}
   Consequently, taking Plancherel's Theorem into account, we
   conclude that
   \begin{equation*}
      \lim_{\sigma\to0^{+}}\int_{\R^n}\abs{\xi}^{2\sigma} 
      \abs{\hat{v}(\xi)}^2\,d\xi=\int_{\R^n}\abs{\hat{v}(\xi)}^2\,d\xi
      =(2\pi)^n\abs{v}_{0,2,\R^n}^2.\qedhere
   \end{equation*}
\end{proof}

\begin{lem}\label{lem1b}
   For any $v\in W^{1,2}(\R^n)$,
   \begin{equation*}
      \lim_{\sigma\to1^{-}} \int_{\R^n} 
      \abs{\xi}^{2\sigma}\abs{\hat{v}(\xi)}^2\,d\xi
      =(2\pi)^n\abs{\nabla v}_{0,2,\R^n}^2.
   \end{equation*}
\end{lem}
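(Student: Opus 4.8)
The plan is to mimic the proof of Lemma~\ref{lem1a}, replacing the role of $\abs{v}_{0,2,\R^n}^2$ by $\abs{\nabla v}_{0,2,\R^n}^2$ and exploiting the identity $(2\pi)^n\abs{\nabla v}_{0,2,\R^n}^2=\int_{\R^n}\abs{\xi}^2\abs{\hat v(\xi)}^2\,d\xi$, which follows from Plancherel's Theorem and \eqref{eq:pdFT} (since $\widehat{\partial_j v}(\xi)=i\xi_j\hat v(\xi)$ and $\sum_j\abs{\xi_j}^2=\abs{\xi}^2$). First I would fix $v\in W^{1,2}(\R^n)$ and, for $\sigma\in(0,1)$, consider
\begin{equation*}
   I_\sigma=\int_{\R^n}\bigl(\abs{\xi}^{2}-\abs{\xi}^{2\sigma}\bigr)\abs{\hat v(\xi)}^2\,d\xi,
\end{equation*}
which is well defined because both $\int_{\R^n}\abs{\hat v}^2\,d\xi$ and $\int_{\R^n}\abs{\xi}^{2}\abs{\hat v}^2\,d\xi$ are finite. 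The goal is then to show $\lim_{\sigma\to1^-}I_\sigma=0$, from which the claim is immediate.

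The estimate proceeds by the same three-region split $\abs{\xi}\le r$, $r<\abs{\xi}<R$, $\abs{\xi}\ge R$, with $0<r\le1<R$. On $\abs{\xi}\le r$ one has $0\le\abs{\xi}^2-\abs{\xi}^{2\sigma}\le\abs{\xi}^{2\sigma}\le 1$ (indeed $\abs{\xi}^2\le\abs{\xi}^{2\sigma}$ when $\abs{\xi}\le1$), so $\abs{J_1}\le\int_{\abs{\xi}\le r}\abs{\hat v(\xi)}^2\,d\xi$, which is $<\varepsilon/3$ for $r$ small since $\hat v\in L^2(\R^n)$. On $\abs{\xi}\ge R$ one has $0\le\abs{\xi}^{2\sigma}-\abs{\xi}^{2}\le\abs{\xi}^{2}$ (now $\abs{\xi}^{2\sigma}\le\abs{\xi}^2$ when $\abs{\xi}\ge1$), so $\abs{J_3}\le\int_{\abs{\xi}\ge R}\abs{\xi}^{2}\abs{\hat v(\xi)}^2\,d\xi$, which is $<\varepsilon/3$ for $R$ large since $\abs{\xi}\hat v\in L^2(\R^n)$. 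Finally, on the fixed annulus $r<\abs{\xi}<R$ the integrand is dominated by $(R^2+R^{2})\abs{\hat v(\xi)}^2$ uniformly in $\sigma$, and pointwise $\abs{\xi}^{2\sigma}\to\abs{\xi}^2$ as $\sigma\to1^-$, so by the Dominated Convergence Theorem $\abs{J_2}<\varepsilon/3$ once $\sigma$ is close enough to $1$. Combining the three bounds gives $\abs{I_\sigma}<\varepsilon$ for $\sigma$ near $1^-$.

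I do not expect a genuine obstacle here; the only point needing slight care is the correct direction of the elementary inequality $\abs{\xi}^{2\sigma}$ versus $\abs{\xi}^2$ on the two unbounded regions (they swap across $\abs{\xi}=1$), so that in each case the dominating term lies in the appropriate $L^2$-weighted space guaranteed by $v\in W^{1,2}(\R^n)$. With $\lim_{\sigma\to1^-}I_\sigma=0$ established, we conclude
\begin{equation*}
   \lim_{\sigma\to1^{-}}\int_{\R^n}\abs{\xi}^{2\sigma}\abs{\hat v(\xi)}^2\,d\xi
   =\int_{\R^n}\abs{\xi}^{2}\abs{\hat v(\xi)}^2\,d\xi
   =(2\pi)^n\abs{\nabla v}_{0,2,\R^n}^2,
\end{equation*}
which is the assertion of the lemma.
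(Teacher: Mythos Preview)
Your argument is correct and matches the paper's proof almost line for line: the same difference $I_\sigma$, the same three-region split $\{\abs{\xi}\le r\}\cup\{r<\abs{\xi}<R\}\cup\{\abs{\xi}\ge R\}$, and the same conclusion via the Plancherel identity for $\abs{\nabla v}_{0,2,\R^n}^2$. Two harmless sign slips to fix: on $\abs{\xi}\le1$ you actually have $\abs{\xi}^2-\abs{\xi}^{2\sigma}\le0$ and on $\abs{\xi}\ge1$ you have $\abs{\xi}^{2\sigma}-\abs{\xi}^2\le0$ (your parenthetical remarks already say this correctly), so the displayed inequality chains should be rewritten in absolute value; the resulting bounds on $\abs{J_1}$ and $\abs{J_3}$ are unaffected.
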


\begin{proof}
   Let $v\in W^{1,2}(\R^n)$. For any $\sigma\in(0,1)$, we now
   consider the integral
   $I_{\sigma}=\int_{\R^n}g_{\sigma}(\xi)\,d\xi$, with
   $g_{\sigma}(\xi)=(\abs{\xi}^2-\abs{\xi}^{2\sigma})\abs{\hat{v}(\xi)}^2$.
   It is clear that $\abs{I_\sigma}<\infty$: on the one hand, the 
   embedding $W^{1,2}(\R^n)\hookrightarrow W^{\sigma,2}(\R^n)$ and 
   Proposition~\ref{pro1} imply that
   $v\in\widetilde{H}^\sigma(\R^n)$; on the 
   other hand, since $v\in W^{1,2}(\R^n)$,
   \begin{multline}\label{eq:lem1b}
      \int_{\R^n}\abs{\xi}^2\abs{\hat{v}(\xi)}^2\,d\xi
      =\sum_{\abs{\beta}=1}\int_{\R^n} \xi^{2\beta}
       \abs{\hat{v}(\xi)}^2\,d\xi \\
      =\sum_{\abs{\beta}=1}\int_{\R^n} 
       \abs{i\,\xi^{\beta}\hat{v}(\xi)}^2\,d\xi
      =\sum_{\abs{\beta}=1}\int_{\R^n} 
       \abs{\widehat{\partial^\beta v}(\xi)}^2\,d\xi \\
      =\sum_{\abs{\beta}=1} (2\pi)^n \int_{\R^n} 
       \abs{\partial^\beta v(x)}^2\,dx
      =(2\pi)^n \abs{\nabla v}_{0,2,\R^n}^2,
   \end{multline}
   which is finite.

   As in the proof of Lemma~\ref{lem1a}, we set
   \begin{equation*}
      I_{\sigma}=\int_{\abs{\xi}\leq r} g_{\sigma}(\xi)\,d\xi
      + \int_{r<\abs{\xi}<R} g_{\sigma}(\xi)\,d\xi
      +\int_{\abs{\xi}\geq R}g_{\sigma}(\xi)\,d\xi=J_1+J_2+J_3,
   \end{equation*}
   with $r$ and $R$ such that $0<r\leq 1<R$. Let $\varepsilon>0$ be given.
   Clearly, we have
   \begin{equation*}
      \abs{J_1}\leq 2\int_{\abs{\xi}\leq r}\abs{\hat{v}(\xi)}^2\,d\xi
      \qquad\text{and}\qquad
      \abs{J_3}\leq 2\int_{\abs{\xi}\geq R}\abs{\xi}^2
      \abs{\hat{v}(\xi)}^2\,d\xi.
  \end{equation*}
  Then, the assumption $v\in W^{1,2}(\R^n)$ implies that $r$ and $R$ can
  be chosen in such a way that $\abs{J_1}$ and $\abs{J_3}$ be
  $\leq\varepsilon/3$. We have just to take $\sigma$ sufficiently
  close to $1$ to achieve $\abs{J_2}<\varepsilon/3$. Consequently,
  \begin{equation*}
      \lim_{\sigma\to1^{+}}\int_{\R^n}g_{\sigma}(\xi)\,d\xi=0.
   \end{equation*}
   From this relation and \eqref{eq:lem1b}, we finally derive that
   \begin{equation*}
      \lim_{\sigma\to1^{-}}\int_{\R^n}\abs{\xi}^{2\sigma} 
      \abs{\hat{v}(\xi)}^2\,d\xi
      =\int_{\R^n}\abs{\xi}^2\abs{\hat{v}(\xi)}^2\,d\xi
      =(2\pi)^n\abs{\nabla v}_{0,2,\R^n}^2.\qedhere
   \end{equation*} 
\end{proof}

We are now ready to prove the main result in this section, which
establishes Theorems~\ref{thm31} and \ref{thm32} in the particular case
$p=2$. The reader may want to check that the constants on the
right-hand side of \eqref{eq:limThm31} and \eqref{eq:limThm32} are
equal, for $p=2$, to those in \eqref{eq:lim1Thm41} and 
\eqref{eq:lim0Thm41}, respectively.

\begin{thm}\label{thm1}
   Let $l\in\N$.
   \begin{enumerate}[(i)]
      \item Let $\sigma_0\in(0,1)$. Then, for any $v\in 
      W^{l+\sigma_0,2}(\R^n)$,
      \begin{equation}\label{eq:lim0Thm41}
         \lim_{\sigma\to0^{+}}\sigma\abs{v}_{l+\sigma,2,\R^n}^2
         =\dfrac{2\pi^{n/2}}{\Gamma(n/2)}
         \abs{v}_{l,2,\R^n}^2.
      \end{equation}
      \item For any $v\in W^{l+1,2}(\R^n)$,
      \begin{equation}\label{eq:lim1Thm41}
         \lim_{\sigma\to1^{-}}(1-\sigma)\abs{v}_{l+\sigma,2,\R^n}^2
         =\dfrac{\pi^{n/2}}{n\Gamma(n/2)}
         \abs{\nabla v}_{l,2,\R^n}^2.
      \end{equation}
   \end{enumerate}
\end{thm}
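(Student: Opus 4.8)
The plan is to deduce Theorem~\ref{thm1} from the machinery already assembled in this section: Proposition~\ref{pro1}, which expresses $\abs{\,\cdot\,}_{\sigma,2,\R^n}^2$ through the Fourier transform and the constant $\Gsn$; the explicit value of $\Gsn$ in \eqref{eq1:lem41}; and Lemmas~\ref{lem1a} and~\ref{lem1b}, which control the Fourier-side integral $\int_{\R^n}\abs{\xi}^{2\sigma}\abs{\hat v(\xi)}^2\,d\xi$ as $\sigma\to0^{+}$ and $\sigma\to1^{-}$. The only genuine computation is the evaluation of $\lim_{\sigma\to0^{+}}\sigma\Gsn$ and $\lim_{\sigma\to1^{-}}(1-\sigma)\Gsn$ via the Gamma function.

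First I would reduce to $l=0$, exactly as in the proofs of Theorems~\ref{thm31} and~\ref{thm32}. By definition, $\abs{v}_{l+\sigma,2,\R^n}^2=\sum_{\abs{\alpha}=l}\abs{\partial^\alpha v}_{\sigma,2,\R^n}^2$, a finite sum; when $v\in W^{l+\sigma_0,2}(\R^n)$ each $\partial^\alpha v$ with $\abs{\alpha}=l$ belongs to $W^{\sigma_0,2}(\R^n)$, and when $v\in W^{l+1,2}(\R^n)$ each belongs to $W^{1,2}(\R^n)$, so the limit may be passed inside the sum. Then \eqref{eq:lim0Thm41} follows from the $l=0$ case together with $\sum_{\abs{\alpha}=l}\abs{\partial^\alpha v}_{0,2,\R^n}^2=\abs{v}_{l,2,\R^n}^2$, and \eqref{eq:lim1Thm41} from the $l=0$ case together with $\sum_{\abs{\alpha}=l}\abs{\nabla(\partial^\alpha v)}_{0,2,\R^n}^2=\abs{\nabla v}_{l,2,\R^n}^2$, the last identity being the definition of $\abs{\nabla v}_{l,2,\R^n}$ recalled in Section~\ref{s2}.

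For $l=0$ and assertion (i): since $v\in W^{\sigma_0,2}(\R^n)\hookrightarrow W^{\sigma,2}(\R^n)$ for $0<\sigma<\sigma_0$, Proposition~\ref{pro1} gives $\sigma\abs{v}_{\sigma,2,\R^n}^2=(2\pi)^{-n}\,\sigma\Gsn\int_{\R^n}\abs{\xi}^{2\sigma}\abs{\hat v(\xi)}^2\,d\xi$, and by Lemma~\ref{lem1a} the integral tends to $(2\pi)^n\abs{v}_{0,2,\R^n}^2$. Writing, from \eqref{eq1:lem41}, $\sigma\Gsn=\frac{2\pi^{(n+1)/2}\Gamma(\sigma+\frac12)}{\Gamma(\sigma+\frac n2)\,\Gamma(1+2\sigma)}\cdot\frac{\sigma}{\sin(\pi\sigma)}$ and letting $\sigma\to0^{+}$, using $\Gamma(\tfrac12)=\sqrt\pi$, $\Gamma(1)=1$ and $\sigma/\sin(\pi\sigma)\to1/\pi$, gives $\lim_{\sigma\to0^{+}}\sigma\Gsn=2\pi^{n/2}/\Gamma(n/2)$, which, after the cancellation of $(2\pi)^{-n}(2\pi)^n$, yields \eqref{eq:lim0Thm41}. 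For assertion (ii), $v\in W^{1,2}(\R^n)\hookrightarrow W^{\sigma,2}(\R^n)$, so Proposition~\ref{pro1} again applies, Lemma~\ref{lem1b} gives $\int_{\R^n}\abs{\xi}^{2\sigma}\abs{\hat v(\xi)}^2\,d\xi\to(2\pi)^n\abs{\nabla v}_{0,2,\R^n}^2$, and the same manipulation of \eqref{eq1:lem41} with $\sigma\to1^{-}$, using $\Gamma(\tfrac32)=\tfrac12\sqrt\pi$, $\Gamma(1+\tfrac n2)=\tfrac n2\Gamma(\tfrac n2)$, $\Gamma(3)=2$ and $(1-\sigma)/\sin(\pi\sigma)\to1/\pi$, gives $\lim_{\sigma\to1^{-}}(1-\sigma)\Gsn=\pi^{n/2}/(n\Gamma(n/2))$, hence \eqref{eq:lim1Thm41}.

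I do not expect a real obstacle here. The two points deserving a line of justification are: (a) that Proposition~\ref{pro1} is legitimately invoked for \emph{every} $\sigma$ in the relevant range, which rests on the Sobolev embeddings $W^{\sigma_0,2}(\R^n)\hookrightarrow W^{\sigma,2}(\R^n)$ and $W^{1,2}(\R^n)\hookrightarrow W^{\sigma,2}(\R^n)$ already used in Lemmas~\ref{lem1a} and~\ref{lem1b}; and (b) the endpoint behaviour of $\sin(\pi\sigma)$, which is what produces the factors $\sigma/\sin(\pi\sigma)\to1/\pi$ and $(1-\sigma)/\sin(\pi\sigma)\to1/\pi$ and thereby the precise constants. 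Everything else is bookkeeping with values of $\Gamma$.
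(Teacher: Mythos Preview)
Your proposal is correct and follows essentially the same approach as the paper's own proof: invoke Proposition~\ref{pro1} to write $\abs{v}_{\sigma,2,\R^n}^2=(2\pi)^{-n}\Gsn\int_{\R^n}\abs{\xi}^{2\sigma}\abs{\hat v(\xi)}^2\,d\xi$, use Lemmas~\ref{lem1a} and~\ref{lem1b} for the Fourier integral, compute the endpoint limits of $\sigma\Gsn$ and $(1-\sigma)\Gsn$ from \eqref{eq1:lem41}, and then reduce the general $l$ to $l=0$ by summing over $\abs{\alpha}=l$. The only differences are presentational: you carry out the reduction to $l=0$ first rather than last, and you spell out the Gamma-function arithmetic that the paper leaves as ``readily follows.''
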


\begin{proof}
   Let us first assume that $l=0$. It readily follows from
   \eqref{eq1:lem41} and the continuity and properties of the $\Gamma$
   function that
   \begin{equation*}
      \lim_{\sigma\to0^{+}}
      \sigma\,\Gsn=\dfrac{2\pi^{n/2}}{\Gamma(n/2)} 
      \qquad\text{and}\qquad
      \lim_{\sigma\to1^{-}}
      (1-\sigma) \Gsn=\dfrac{\pi^{n/2}}{n\Gamma(n/2)}.
   \end{equation*}
   Consequently, by Proposition~\ref{pro1} and Lemma~\ref{lem1a}, we 
   have
   \begin{equation*}
      \lim_{\sigma\to0^{+}} \sigma\abs{v}_{\sigma,2,\R^n}^2
      =\lim_{\sigma\to0^{+}} \sigma (2\pi)^{-n} \Gsn
      \int_{\R^n}\abs{\xi}^{2\sigma}\abs{\hat{v}(\xi)}^2 \,d\xi
      =\dfrac{2\pi^{n/2}}{\Gamma(n/2)}
      \abs{v}_{0,2,\R^n}^2.
   \end{equation*}
   Likewise, by Proposition~\ref{pro1} and Lemma~\ref{lem1b},
   \begin{multline*}
      \lim_{\sigma\to1^{-}} (1-\sigma)\abs{v}_{\sigma,2,\R^n}^2 \\
      =\lim_{\sigma\to1^{-}} (1-\sigma) (2\pi)^{-n} \Gsn
      \int_{\R^n}\abs{\xi}^{2\sigma}\abs{\hat{v}(\xi)}^2 \,d\xi 
      =\dfrac{\pi^{n/2}}{n\Gamma(n/2)}
      \abs{\nabla v}_{0,2,\R^n}^2.
   \end{multline*}
   The reasoning for $l\geq 1$ follows the same pattern already shown 
   in Theorems~\ref{thm31} and \ref{thm32}.
\end{proof}

In the proof of Theorem~\ref{thm1} and the preceding lemmas,
Proposition~\ref{pro1} plays a fundamental role. This result can be
extended to characterize the space $W^{r,2}(\R^n)$ for any $r\geq 0$.
Although it is not required here, we include such an extension in this
section for the sake of completeness.

\begin{thm}\label{thm:last}
   Let $r\in[0,\infty)$. Then
   \begin{equation}\label{eq:WisLintH}
      W^{r,2}(\R^n)=L^2(\R^n)\cap\widetilde{H}^r(\R^n),
   \end{equation}
   where $\widetilde{H}^r(\R^n)$ is given by \eqref{eq:defH} with $r$ 
   instead of $\sigma$.
   Moreover, for any $m\in\N$ and $s\geq0$ such that $r=m+s$,
   \begin{equation}\label{eq:WisLintX}
      W^{r,2}(\R^n)=L^2(\R^n)\cap X^{m,s},
   \end{equation}
   where $X^{m,s}=\bigl\{ v\in\mathcal{D}'(\R^n) \bigm\vert
   \forall\alpha\in\N^n,\ \abs{\alpha}=m, \ \partial^\alpha
   v\in\widetilde{H}^s(\R^n) \}$,
   $\mathcal{D}'(\R^n)$ being the space of distributions on $\R^n$.
\end{thm}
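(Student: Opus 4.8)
The plan is to prove \eqref{eq:WisLintH} by reducing to the already-established Proposition~\ref{pro1} (the case $r\in(0,1)$) via a differentiation argument, and then to deduce \eqref{eq:WisLintX} as a more or less formal consequence. Write $r=m+s$ with $m=\floor r$ when $r\notin\N$, and with a free choice of $m\in\N$, $s\geq 0$, $r=m+s$ in the last assertion (note the two decompositions need not agree, so some care is needed). First I would settle the integer cases $s=0$: for $r=m\in\N$, the Fourier-transform characterization $W^{m,2}(\R^n)=\{v\in L^2\mid \abs\xi^{m}\hat v\in L^2\}$ is classical and follows from \eqref{eq:pdFT} and Plancherel exactly as in display \eqref{eq:lem1b}, using \eqref{eq:xToalpha} to pass between $\abs\xi^{2m}$ and $\sum_{\abs\alpha=m}\binom m\alpha\xi^{2\alpha}$; this already gives \eqref{eq:WisLintH} for $r\in\N$. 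For $r\notin\N$, write $r=m+\sigma$ with $m=\floor r$ and $\sigma\in(0,1)$. By definition $v\in W^{r,2}(\R^n)$ iff $v\in W^{m,2}(\R^n)$ and $\sum_{\abs\alpha=m}\abs{\partial^\alpha v}_{\sigma,2,\R^n}^2<\infty$. Apply Proposition~\ref{pro1} to each $\partial^\alpha v$ (which lies in $L^2(\R^n)$ by the integer case): this semi-norm is finite iff $\int_{\R^n}\abs\xi^{2\sigma}\abs{\widehat{\partial^\alpha v}(\xi)}^2\,d\xi<\infty$, and by \eqref{eq:pdFT} the integrand is $\abs\xi^{2\sigma}\xi^{2\alpha}\abs{\hat v(\xi)}^2$. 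Summing over $\abs\alpha=m$ and invoking \eqref{eq:xToalpha} turns the condition into $\int_{\R^n}\abs\xi^{2(m+\sigma)}\abs{\hat v(\xi)}^2\,d\xi<\infty$, i.e. $v\in\widetilde H^r(\R^n)$. Combined with $v\in L^2(\R^n)$, this is exactly \eqref{eq:WisLintH}.

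Next I would prove \eqref{eq:WisLintX}. Fix $m\in\N$ and $s\geq0$ with $r=m+s$. The inclusion $W^{r,2}(\R^n)\subset L^2(\R^n)\cap X^{m,s}$ is immediate from \eqref{eq:WisLintH}: if $v\in W^{r,2}(\R^n)$ then $v\in L^2(\R^n)$ and $\int\abs\xi^{2r}\abs{\hat v}^2<\infty$; for any $\alpha$ with $\abs\alpha=m$ we have $\widehat{\partial^\alpha v}(\xi)=i^{m}\xi^\alpha\hat v(\xi)$, so $\int\abs\xi^{2s}\abs{\widehat{\partial^\alpha v}(\xi)}^2\,d\xi=\int\abs\xi^{2s}\xi^{2\alpha}\abs{\hat v(\xi)}^2\,d\xi\leq\int\abs\xi^{2s}\abs\xi^{2m}\abs{\hat v(\xi)}^2\,d\xi<\infty$, and since $\partial^\alpha v\in\mathcal S'(\R^n)$ with $\widehat{\partial^\alpha v}\in L^2_{\mathrm{loc}}$ this gives $\partial^\alpha v\in\widetilde H^s(\R^n)$; hence $v\in X^{m,s}$. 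For the reverse inclusion, take $v\in L^2(\R^n)\cap X^{m,s}$. Then $\hat v\in L^2(\R^n)$ (Plancherel), and for each $\abs\alpha=m$ the distribution $\partial^\alpha v$ lies in $\widetilde H^s(\R^n)$, meaning $\widehat{\partial^\alpha v}\in L^1_{\mathrm{loc}}$ with $\int\abs\xi^{2s}\abs{\widehat{\partial^\alpha v}(\xi)}^2\,d\xi<\infty$; by \eqref{eq:pdFT}, $\widehat{\partial^\alpha v}=i^m\xi^\alpha\hat v$, so $\int\abs\xi^{2s}\xi^{2\alpha}\abs{\hat v(\xi)}^2\,d\xi<\infty$. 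Summing over $\abs\alpha=m$ and using \eqref{eq:xToalpha} yields $\int\abs\xi^{2s}\abs\xi^{2m}\abs{\hat v(\xi)}^2\,d\xi=\int\abs\xi^{2r}\abs{\hat v(\xi)}^2\,d\xi<\infty$, i.e. $v\in\widetilde H^r(\R^n)$; together with $v\in L^2(\R^n)$ and \eqref{eq:WisLintH}, this gives $v\in W^{r,2}(\R^n)$.

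The only genuine subtlety — and the step I expect to require the most care — is the bookkeeping around $\widehat{\partial^\alpha v}\in L^1_{\mathrm{loc}}(\R^n)$ and the identity $\widehat{\partial^\alpha v}=i^m\xi^\alpha\hat v$: one must know that $\hat v$ is a genuine (locally integrable) function before multiplying by the polynomial $\xi^\alpha$ and that the equality holds pointwise a.e., not merely in $\mathcal S'$. This is handled by the fact that $v\in L^2(\R^n)$ forces $\hat v\in L^2(\R^n)\subset L^1_{\mathrm{loc}}(\R^n)$, after which $\xi^\alpha\hat v$ is a well-defined locally integrable function and \eqref{eq:pdFT} gives the pointwise identification. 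Everything else is the same change between $\abs\xi^{2l}$ and $\sum_{\abs\alpha=l}\binom l\alpha\xi^{2\alpha}$ already used repeatedly in the paper, so no further difficulty arises.
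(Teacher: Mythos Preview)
Your approach is correct and essentially the same as the paper's: both reduce everything to Proposition~\ref{pro1}, the multinomial identity \eqref{eq:xToalpha}, and \eqref{eq:pdFT}, passing between $\abs{v}_{r,2,\R^n}$ and $\int\abs{\xi}^{2r}\abs{\hat v}^2\,d\xi$ and then treating $X^{m,s}$ by the same algebra. The one step you elide that the paper makes explicit (its Step~2) is that the reverse inclusion $L^2(\R^n)\cap\widetilde H^r(\R^n)\subset W^{\floor r,2}(\R^n)$ must be checked \emph{before} you can apply Proposition~\ref{pro1} to the $\partial^\alpha v$; this is done by splitting $\int\xi^{2\alpha}\abs{\hat v(\xi)}^2\,d\xi$ at $\abs\xi=1$ and bounding the pieces by $\int\abs{\hat v}^2$ and $\int\abs\xi^{2r}\abs{\hat v}^2$.
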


\begin{proof}
   We put $r=l+\sigma$, with $l=\floor{r}$ and $\sigma\in[0,1)$. Let
   $m\in\N$ and $s\geq0$ such that $r=m+s$. We remark that $m\leq l$.
   
   Since $L^2(\R^n)\subset
   \mathcal{S}'(\R^n)$ and $L^2(\R^n)\subset
   L^1_{\mathrm{loc}}(\R^n)$, it is clear that
   \begin{equation}\label{eq:LintH}
      L^2(\R^n)\cap\widetilde{H}^r(\R^n)= \Bigl\{ v\in L^2(\R^n)\Bigm\vert
      \int_{\R^n}\abs{\xi}^{2r} \abs{\hat{v}(\xi)}^2\,d\xi <\infty \Bigr\}
   \end{equation}
   and
   \begin{multline}\label{eq:LintX}
      L^2(\R^n)\cap X^{m,s}= \Bigl\{ v\in L^2(\R^n)\Bigm\vert
      \forall\alpha\in\N^n,\ \abs{\alpha}=m, \\
      \ \widehat{\partial^\alpha v}\in L^1_{\mathrm{loc}}(\R^n)
      \text{ and }\int_{\R^n}\abs{\xi}^{2s} 
      \abs{\widehat{\partial^\alpha v}(\xi)}^2\,d\xi <\infty \Bigr\}.
   \end{multline}
   We divide the proof into several steps: Steps~1 and~2 prove 
   \eqref{eq:WisLintH}, whereas Steps~3 and~4 establish 
   \eqref{eq:WisLintX}.
   
   \Step{1} $W^{r,2}(\R^n)\subset L^2(\R^n)\cap\widetilde{H}^r(\R^n)$. \\
   Let $v\in W^{r,2}(\R^n)$. By \eqref{eq:LintH}, we have
   just to show that
   $\int_{\R^n}\abs{\xi}^{2r}\abs{\hat{v}(\xi)}^2\,d\xi$ is finite.
   Let us first consider the case $\sigma\in(0,1)$. Every $l$-th
   derivative $\partial^\alpha v$ belongs to $W^{\sigma,2}(\R^n)$. By
   Proposition~\ref{pro1}, we have
   \begin{align*}
      &\int_{\R^n}\abs{\xi}^{2r}\abs{\hat{v}(\xi)}^2\,d\xi \\
      &\qquad=\int_{\R^n}\abs{\xi}^{2\sigma}\abs{\xi}^{2l}
            \abs{\hat{v}(\xi)}^2\,d\xi 
      =\sum_{\abs{\alpha}=l}\binom{l}{\alpha}
            \int_{\R^n}\abs{\xi}^{2\sigma}\xi^{2\alpha}
	    \abs{\hat{v}(\xi)}^2\,d\xi \\
      &\qquad=\sum_{\abs{\alpha}=l}\binom{l}{\alpha}
        \int_{\R^n}\abs{\xi}^{2\sigma}
        \abs{\widehat{\partial^\alpha v}(\xi)}^2\,d\xi 
      =\sum_{\abs{\alpha}=l}\binom{l}{\alpha}
        (2\pi)^{n}\, \Gsn^{-1} 
        \abs{\partial^\alpha v}_{\sigma,2,\R^n}^2 \\
      &\qquad\leq M (2\pi)^{n}\, \Gsn^{-1} \sum_{\abs{\alpha}=l}
        \abs{\partial^\alpha v}_{\sigma,2,\R^n}^2 
      = M (2\pi)^{n}\, \Gsn^{-1} \abs{v}_{r,2,\R^n}^2 <\infty,
   \end{align*}
   with $M=\max\bigl\{ \binom{l}{\alpha} \bigm\vert \alpha\in\N^n,
   \ \abs{\alpha}=l \bigr\}$. If $\sigma=0$, the above reasoning is
   still valid, taking $\Gsn=1$ and using Plancherel's Theorem instead
   of Proposition~\ref{pro1}.
   
   \Step{2} $L^2(\R^n)\cap\widetilde{H}^r(\R^n)\subset W^{r,2}(\R^n)$.\\
   Let $v\in L^2(\R^n)\cap\widetilde{H}^r(\R^n)$. For any 
   $\alpha\in\N^n$ such that $\abs{\alpha}\leq l$, we have
   \begin{align*}
      \int_{\R^n} \abs{\xi}^{2r} \abs{\hat{v}(\xi)}^2\,d\xi
      &=\int_{\R^n}\abs{\xi}^{2(r-\abs{\alpha})} 
      \abs{\xi}^{2\abs{\alpha}} \abs{\hat{v}(\xi)}^2\,d\xi \\
      &=\sum_{\abs{\beta}=\abs{\alpha}} \binom{\abs{\alpha}}{\beta}
      \int_{\R^n}\abs{\xi}^{2(r-\abs{\alpha})} 
      \xi^{2\beta} \abs{\hat{v}(\xi)}^2\,d\xi.
   \end{align*}
   Consequently, $\widehat{\partial^\alpha v}=i^{\abs{\alpha}}\xi^\alpha 
   \hat{v}$ belongs to $L^2(\R^n)$,  since
   \begin{align*}
      \int_{\R^n} \abs{\xi^\alpha\hat{v}(\xi)}^2\,d\xi
      &=\int_{\abs{\xi}<1} \xi^{2\alpha}\abs{\hat{v}(\xi)}^2\,d\xi
         +\int_{\abs{\xi}\geq1} \xi^{2\alpha}\abs{\hat{v}(\xi)}^2\,d\xi \\
      &\leq \int_{\abs{\xi}<1} \abs{\hat{v}(\xi)}^2\,d\xi
         + \binom{\abs{\alpha}}{\alpha} \int_{\abs{\xi}\geq 1} 
         \abs{\xi}^{2(r-\abs{\alpha})} \xi^{2\alpha} 
	 \abs{\hat{v}(\xi)}^2\,d\xi \\
      &\leq \int_{\R^n} \abs{\hat{v}(\xi)}^2\,d\xi
         +\int_{\R^n} \abs{\xi}^{2r} \abs{\hat{v}(\xi)}^2\,d\xi <\infty.
   \end{align*}
   We deduce from Plancherel's Theorem that $v\in W^{l,2}(\R^n)$. If
   $\sigma\in(0,1)$, we still have to see that $\abs{v}_{r,2,\R^n}$ is
   finite. But a reasoning analogous to that in Step~1 shows, as 
   desired, that
   \begin{equation*}
      \abs{v}_{r,2,\R^n}^2\leq (2\pi)^{-n}\, \Gsn \int_{\R^n} 
        \abs{\xi}^{2r}\abs{\hat{v}(\xi)}^2\,d\xi <\infty.
   \end{equation*}
   
   \Step{3} $L^2(\R^n)\cap X^{m,s}\subset W^{r,2}(\R^n)$.\\
   Let $v\in L^2(\R^n)\cap X^{m,s}$. Then, taking 
   \eqref{eq:LintX} into account, we have
   \begin{align*}
      \int_{\R^n} \abs{\xi}^{2r} \abs{\hat{v}(\xi)}^2\,d\xi
      &=\int_{\R^n}\abs{\xi}^{2s} 
      \abs{\xi}^{2m} \abs{\hat{v}(\xi)}^2\,d\xi \\
      &=\sum_{\abs{\alpha}=m} \binom{m}{\alpha}
      \int_{\R^n}\abs{\xi}^{2s} 
      \xi^{2\alpha} \abs{\hat{v}(\xi)}^2\,d\xi \\
      &=\sum_{\abs{\alpha}=m} \binom{m}{\alpha}
      \int_{\R^n}\abs{\xi}^{2s} 
      \abs{\widehat{\partial^\alpha v}(\xi)}^2\,d\xi 
      < \infty.
   \end{align*}
   Thus, it follows from \eqref{eq:WisLintH} and \eqref{eq:LintH} that
   $v\in W^{r,2}(\R^n)$.
   
   \Step{4} $W^{r,2}(\R^n)\subset L^2(\R^n)\cap X^{m,s}$.\\
   Let $v\in W^{r,2}(\R^n)$. Using \eqref{eq:WisLintH}, the
   reasoning in Step~2 shows that, for any $\alpha\in\N^n$ such that 
   $\abs{\alpha}=m$, $\widehat{\partial^\alpha v}$ belongs to 
   $L^2(\R^n)\subset L^1_{\mathrm{loc}}(\R^n)$ and
   \begin{align*}
      \int_{\R^n}\abs{\xi}^{2s} 
      \abs{\widehat{\partial^\alpha v}(\xi)}^2\,d\xi
      &=\int_{\R^n}\abs{\xi}^{2s} 
      \xi^{2\alpha} \abs{\hat{v}(\xi)}^2\,d\xi \\
      &\leq\sum_{\abs{\beta}=m} \binom{m}{\beta}
      \int_{\R^n}\abs{\xi}^{2s} 
      \xi^{2\beta} \abs{\hat{v}(\xi)}^2\,d\xi \\
      &=\int_{\R^n} \abs{\xi}^{2r} \abs{\hat{v}(\xi)}^2\,d\xi
      <\infty.
   \end{align*}
   We conclude that, by \eqref{eq:LintX}, $v\in L^2(\R^n)\cap X^{m,s}$.
\end{proof}

\begin{rem}
   Let $r>0$. Theorem~\ref{thm:last} allows us to endow $W^{r,2}(\R^n)$
   with semi-norms defined in $\widetilde{H}^r(\R^n)$ or $X^{m,s}$. 
   For example, the mapping
   \begin{equation*}
      \abs{\,\cdot\,}_{0,r}:v\mapsto\biggl(
       \int_{\R^n} \abs{\xi}^{2r} \abs{\hat{v}(\xi)}^2\,d\xi
       \biggr)^{1/2}
   \end{equation*}
   is a semi-norm in $\widetilde{H}^r(\R^n)$ (in fact, a hilbertian
   norm if $r<n/2$; cf. \cite{ArcLopTor04}), so it is in
   $W^{r,2}(\R^n)$. It follows from steps~1 and~2 in the proof of
   Theorem~\ref{thm:last} that $\abs{\,\cdot\,}_{0,r}$ and
   $\abs{\,\cdot\,}_{r,2,\R^n}$ are equivalent semi-norms. The
   equivalence constants depend on $\sigma$, since they contain
   $\Gsn$. In fact, taking into account \eqref{eq1:lem41} and the
   continuity of the Gamma function, it is readily seen that, given
   $l\in\N$, there exist constants $C_1$ and $C_2$, depending on $n$ 
   and $l$, such that, for all $\sigma\in(0,1)$ and
   $v\in W^{l+\sigma,2}(\R^n)$,
   \begin{equation*}
      C_1\abs{v}_{0,l+\sigma} \leq (2\sigma(1-\sigma))^{1/2}
      \abs{v}_{l+\sigma,2,\R^n}\leq C_2\abs{v}_{0,l+\sigma}.\qed
   \end{equation*}
   \let\qed\relax
\end{rem}


\subsection*{Acknowledgements}
This work has been  supported by the Mi\-nis\-terio de Ciencia e
Innovaci\'on (Spain), through the Research Project MTM\-2009-07315.



\end{document}